\newtheorem{theorem}{Theorem}
\newtheorem{conj}[theorem]{Conjecture}
\newtheorem{cor}[theorem]{Corollary}
\newtheorem{fact}[theorem]{Fact}
\newtheorem{lemma}[theorem]{Lemma}
\newtheorem{prob}[theorem]{Problem}
\newtheorem{qst}[theorem]{Question}
\title{Erd\H{o}s-Ko-Rado Theorems for Paths in Graphs}
\author{
Neal Bushaw \thanks{
Department of Mathematics and Applied Mathematics,
Virginia Commonwealth University, USA
}
\and
James Danielsson \footnotemark[1]
\and
Glenn Hurlbert \footnotemark[1]
}
\date{}
\def\a{{\alpha}}
\def\D{{\Delta}}
\def\m{{\mu}}
\def\t{{\tau}}
\def\w{{\omega}}
\def\cA{{\cal A}}
\def\cB{{\cal B}}
\def\cC{{\cal C}}
\def\cF{{\cal F}}
\def\cH{{\cal H}}
\def\cI{{\cal I}}
\def\cP{{\cal P}}
\def\cR{{\cal R}}
\def\cS{{\cal S}}
\def\cM{{\cal M}}
\def\cK{{\cal K}}
\def\zF{{\mathbb F}}
\def\zP{{\mathbb P}}
\def\zpq{{\zP(q)}}
\def\zprm{{\zP(r-1)}}
\def\zZ{{\mathbb Z}}
\def\mt{{\emptyset}}
\def\deg{{\sf deg}}
\def\ekr{{\sf EKR}}
\def\gir{{\sf gir}}
\def\im{{\sf im}}
\def\orP{{\overrightarrow{P}}}
\def\olP{{\overleftarrow{P}}}
\def\snt{{S_n^t}}
\def\dist{{\sf dist}}
\DeclarePairedDelimiter\ceil{\lceil}{\rceil}
\DeclarePairedDelimiter\floor{\lfloor}{\rfloor}
\definecolor{brwn}{RGB}{140, 70, 20}
\definecolor{gren}{RGB}{  0,140, 10}
\definecolor{mnt}{RGB}{167,230,215}
\definecolor{orn}{RGB}{253,210,125}
\definecolor{sky}{RGB}{220,245,255}
\definecolor{commiepinko}{RGB}{254,1,154}
\newcommand{\nb}[1]{\textcolor{commiepinko}{\sf{#1}}}
\newcommand{\jd}[1]{\textcolor{gren}{\sf{#1}}}
\newcommand{\gh}[1]{\textcolor{blue}{\sf{#1}}}
\newcommand{\up}[1]{\textcolor{red}{\sf{#1}}}
\newcommand{\fract}[2]{\frac{}{}}
\begin{document}

\maketitle

\begin{abstract}
A family of sets is $s$-intersecting if every pair of its sets has at least $s$ elements in common.
It is an $s$-star if all its members have some $s$ elements in common.
A family of sets is called $s$-\ekr\ if all its $s$-intersecting subfamilies have size at most that of some $s$-star.
For example, the classic 1961 Erd\H{o}s-Ko-Rado theorem states essentially that the family of $r$-sized subsets of $\{1,2,\ldots,n\}$ is $s$-\ekr\ when $n$ is a large enough function of $r$ and $s$, and the 1967 Hilton-Milner theorem provides the near-star structure of the largest non-star intersecting family of such sets.
Two important conjectures along these lines followed: by Chv\'atal in 1974, that every subset-closed family of sets is 1-\ekr, and by Holroyd and Talbot in 2005, that, for every graph, the family of all its $r$-sized independent sets is 1-\ekr\ when every maximal independent set has size at least $2r$.

In this paper we present similar 1-\ekr\ results for families of length-$r$ paths in graphs, specifically for sun graphs, which are cycles with pendant edges attached in a uniform way, and theta graphs, which are collections of pairwise internally disjoint paths sharing the same two endpoints.
We also prove $s$-\ekr\ results for such paths in suns, and give a Hilton-Milner type result for them as well.
A set is a transversal of a family of sets if it intersects each member of the family, and the transversal number of the family is the size of its smallest transversal.
For example, stars have transversal number 1, and the Hilton-Milner family has transversal number 2. We conclude the paper with some transversal results involving what we call triangular families, including a few results for projective planes.
\end{abstract}

\newpage

\section{Introduction}
\label{s:Intro}

Let $\cF$ be a family of sets, and use the shorthand notations $\cup\cF=\cup_{A\in\cF}A$ and $\cap\cF=\cap_{A\in\cF}A$.
We say that $\cF$ is $s$-{\it intersecting} if $|A\cap B|\ge s$ for all $A,B\in\cF$, and $\cF$ is an $s$-{\it star} if $|\cap\cF|\ge s$.
By {\it intersecting} we mean 1-intersecting, and by {\it star} we mean 1-star.
We say that $\cF$ is {\it exactly} $s$-intersecting if $|A\cap B|=s$ for all $A,B\in\cF$.
For $X\subseteq\cup\cF$ with $|X|=s$, we define the {\it full $s$-star of $\cF$ at $X$} to be $\cF_X=\left\{A\ \mid\ X\subseteq A\in\cF\right\}$.
We also write $\cF^r=\left\{A\in\cF\ \mid\ \left|A\right|=r\right\}$. 
For each of these, note that the definition is trivial in the case $s=0$, and so throughout we will implicitly assume that $s>0$.
We say that $\cF$ is $s$-\ekr\ if some $s$-star $\cF_x$ satisfies $|\cF_x|\ge |\cH|$ for all $s$-intersecting $\cH\subseteq\cF$, and {\it strictly $s$-\ekr} if every largest $s$-intersecting family is an $s$-star, dropping the mention of $s$ when it equals 1.
We extend this language to graphs as follows.
Let $G$ be a graph and $\cF$ be a family of vertex sets of subgraphs of $G$.
We say that $G$ is $(\cF,s)$-\ekr if $\cF$ is $s$-\ekr\ ($\cF$-\ekr\ if $s=1$).  

One of the cornerstones of extremal set theory is the following result of Erd\H{o}s, Ko, and Rado.

\begin{theorem}
\label{t:EKR}
\cite{ErdKoRad}
There is a function $n_0(r,s)$ such that, if $\cF \subseteq \binom{[n]}{r}$ is $s$-intersecting and $n\ge n_0(r,s)$, then $|\cF| \le \binom{n-s}{r-s}$.
Moreover, equality holds if and only if $\cF$ is an $s$-star.
\end{theorem}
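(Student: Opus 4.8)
The plan is to prove the cardinality bound $|\cF|\le\binom{n-s}{r-s}$ first and then upgrade it to the uniqueness statement, the main tool being the method of shifts (compressions). For $1\le i<j\le n$ and $A\in\binom{[n]}{r}$, let $S_{ij}(A)=(A\setminus\{j\})\cup\{i\}$ when $j\in A$, $i\notin A$, and $(A\setminus\{j\})\cup\{i\}\notin\cF$, and $S_{ij}(A)=A$ otherwise; put $S_{ij}(\cF)=\{S_{ij}(A):A\in\cF\}$. The first task is the routine check that $|S_{ij}(\cF)|=|\cF|$ and, crucially, that $S_{ij}(\cF)$ is again $s$-intersecting; this is a short case analysis on how $i$ and $j$ meet the two sets being compared. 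Iterating the $S_{ij}$ over all pairs stabilizes at a \emph{compressed} family $\cF^\ast$ with $|\cF^\ast|=|\cF|$, still $s$-intersecting, so it suffices to bound $|\cF^\ast|$.

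For a compressed $s$-intersecting family I would show that, provided $n\ge n_0(r,s)$, every member of $\cF^\ast$ contains $[s]=\{1,2,\ldots,s\}$; then $\cF^\ast$ lies inside the full $s$-star $\cF^\ast_{[s]}$ and the bound $\binom{n-s}{r-s}$ is immediate. To prove this, suppose some $A\in\cF^\ast$ omits an element of $[s]$. Using that $\cF^\ast$ is compressed, one forces many $r$-sets into $\cF^\ast$ by pushing the elements of $A$ down onto small labels, and when $n$ is large there is enough ``room'' in $[n]\setminus A$ to make one of these forced sets meet $A$ in fewer than $s$ points, contradicting $s$-intersection. Phrasing this as an induction on $r-s$ (base case $r=s$ trivial) keeps the bookkeeping under control; for the special case $s=1$ one could instead run Katona's cyclic-permutation argument, which even yields the sharp threshold $n\ge 2r$.

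For the equality statement the key is a stability estimate: if $\cF$ is $s$-intersecting but $|\cap\cF|<s$, then $|\cF|$ falls short of $\binom{n-s}{r-s}$ by a quantity of order $n^{r-s-1}$, so $|\cF|<\binom{n-s}{r-s}$ once $n$ is large. I would obtain this by a Hilton--Milner-type count: fixing $A\in\cF$ and splitting $\cF$ according to a candidate $s$-set $T$, the members of $\cF$ avoiding $T$ must still meet $A$ in $\ge s$ elements, which confines them to $O(n^{r-s-1})$ choices, while no single $s$-set captures all of $\cF$ by hypothesis. Combined with the compression bound, this shows every maximum $s$-intersecting family is an $s$-star. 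The hard part is precisely this uniqueness/stability step: the plain cardinality bound drops out fairly mechanically from compression once $n$ is large, but separating the extremal $s$-stars from every other maximal $s$-intersecting family needs the finer counting above --- exactly the phenomenon that the Hilton--Milner theorem mentioned in the introduction makes precise when $s=1$.
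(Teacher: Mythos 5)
First, note that the paper does not prove this statement at all --- it is the classical Erd\H{o}s--Ko--Rado theorem, cited from \cite{ErdKoRad} --- so there is no internal proof to compare against; I am evaluating your sketch on its own terms. The compression setup (the shifts $S_{ij}$ preserve cardinality and the $s$-intersecting property, so it suffices to bound shift-stable families) is correct and standard. But your central lemma is false: it is \emph{not} true that every member of a compressed $s$-intersecting family contains $[s]$ once $n$ is large. For $r=2$, $s=1$ the triangle $\{\{1,2\},\{1,3\},\{2,3\}\}$ is intersecting and shift-stable for every $n$, yet $\{2,3\}$ omits $1$; for $r=3$, $s=1$ the family $\cK=\{H\in\binom{[n]}{3}\mid |H\cap[3]|\ge 2\}$ (which appears in this paper's introduction) is likewise intersecting, shift-stable for every $n$, and contains $\{2,3,n\}$. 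Your proposed route to a contradiction --- push the elements of an offending $A$ down onto small labels until some forced set meets $A$ in fewer than $s$ points --- can never terminate, because every forced set lies in $\cF^\ast$, which is $s$-intersecting by construction; each single shift replaces one element of a set by a smaller one and so changes any pairwise intersection by at most one element, and the counterexamples above show the process simply closes up without ever producing a violating pair. So the step ``compressed $\Rightarrow$ contained in the star at $[s]$'' is where the entire content of the theorem lives, and as sketched it does not work.

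The standard shifting proofs replace that step with a genuinely different intermediate statement: either Frankl's induction on $n$, splitting a shifted family into $\{A\mid n\notin A\}$ and $\{A\setminus\{n\}\mid n\in A\}$ and showing the latter is still $s$-intersecting (or $(s+1)$-intersecting after enough shifts), or the lemma that a shifted $s$-intersecting family satisfies $|A\cap B\cap[2r-s]|\ge s$ for all $A,B$. One of these (or, for $s=1$, Katona's cycle argument, which you correctly flag) is needed to get the bound. Your uniqueness step is the right idea in outline --- a Hilton--Milner/stability count showing a non-star loses a positive power of $n$ --- but the count as stated is also off: the sets meeting a fixed $r$-set $A$ in at least $s$ points number $\Theta(n^{r-s})$, the same order as the star, not $O(n^{r-s-1})$; the deficit has to come from the additional requirement that such sets meet a member of $\cF$ avoiding the candidate center $T$, which is what costs the extra factor of $n$.
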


For $s=1$ we have $n_0(r,s)=2r+1$, with the statement for $n=2r$ being true without the uniqueness of the star extremum.
For $s>1$ and $n<n_0(r,s)$, the sizes and structures of maximum $s$-intersecting families were determined in the Complete Intersection Theorem of \cite{AhlsKhac}.

It is natural to ask which families of sets $\cF$ give a theorem similar to Theorem \ref{t:EKR}; that is, which $\cF$ are \ekr? 
Chv\'atal \cite{Chvat} made the following conjecture.

\begin{conj}
\label{c:Chvatal}
\cite{Chvat}
Every subset-closed family of sets is \ekr.
\end{conj}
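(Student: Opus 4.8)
The plan is to pursue the classical compression (down-shifting) strategy — the route by which Chv\'atal himself settled the conjecture for \emph{already-shifted} families — and to try to close the gap between that special case and the general one. Write $\cup\cF=[n]$, and for $i<j$ let $S_{ij}$ be the down-compression sending a set $A$ with $j\in A$, $i\notin A$ to $(A\setminus\{j\})\cup\{i\}$ whenever that set is not already present in $\cF$, and fixing every other member of $\cF$. First I would record the two routine facts: $|S_{ij}(\cF)|=|\cF|$, and if $\cF$ is subset-closed then so is $S_{ij}(\cF)$ (given $B\subseteq (A\setminus\{j\})\cup\{i\}$ with $B\notin S_{ij}(\cF)$, a short case analysis on whether $i\in B$, comparing $B$ and $(B\setminus\{i\})\cup\{j\}$ against $\cF$, yields a contradiction). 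Iterating the $S_{ij}$ in any order until none changes $\cF$ produces a subset-closed family $\cG$ that is \emph{shifted}, and for such a $\cG$ Chv\'atal's theorem gives the target: the star $\cG_1$ is a maximum intersecting subfamily of $\cG$.

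The crux is the \emph{transfer} back from $\cG$ to $\cF$. Writing $m(\cH)$ for the largest size of an intersecting subfamily of a family $\cH$, the reduction that works for the uniform Erd\H{o}s--Ko--Rado theorem would want both $m(S_{ij}(\cF))\ge m(\cF)$ and enough control to relate the optimal star of $\cF$ to element $1$. I expect this to be \textbf{the main obstacle}, and it is precisely where progress on the full conjecture has stalled: a single compression $S_{ij}$ can split a maximum intersecting subfamily $\cH\subseteq\cF$ — an image $S_{ij}(A)$ and an untouched $B\in\cH$ with $A\cap B=\{j\}$ may cease to meet — so the intersecting structure is not manifestly preserved; and even granting $m(\cG)=m(\cF)$, the passage $\cF\to\cG$ only inflates the star at $1$ (so that $|\cG_1|\ge|\cF_1|$), hence the bound $m(\cF)\le m(\cG)=|\cG_1|$ need not feed back to $m(\cF)\le\max_x|\cF_x|$. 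The honest reading is that the compress-then-apply-the-shifted-case paradigm, so effective for uniform families, does not carry over here without a genuinely new idea. A realistic intermediate target is to make the transfer go through under an added hypothesis on $\cF$ — a bound on the number or the sizes of its maximal sets, or the presence of a near-universal element — which is essentially how the known partial results are obtained.

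If the compression route resists, I would instead induct on $|\cup\cF|$. Fix $x\in[n]$ and split $\cF$ into its link $\cF_x=\{A\in\cF:x\in A\}$ and its deletion $\cF-x=\{A\in\cF:x\notin A\}$, both subset-closed on $[n]\setminus\{x\}$; given a maximum intersecting subfamily, first normalize it by pushing each set up toward a maximal set of $\cF$ (legal since supersets of a nonempty set meet everything it meets), then decompose it into the part lying in $\cF_x$ and the part lying in $\cF-x$, which must be cross-intersecting, and balance the two inductive bounds against the trivial bound $|\cF_x|$. A third, more speculative line is to phrase the maximum-intersecting-subfamily problem as an integer program on the incidence structure of the maximal sets of $\cF$ and argue that its linear relaxation is optimized at a star. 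But Conjecture \ref{c:Chvatal} has resisted for half a century, so along any of these routes the expected deliverable is a new family of cases — most naturally the ones arising from the graph constructions of this paper — rather than the conjecture in full.
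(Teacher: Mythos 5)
There is nothing in the paper to compare against: the statement you were asked to prove is Conjecture~\ref{c:Chvatal}, Chv\'atal's 1974 conjecture, which the paper cites from \cite{Chvat} as a famous \emph{open problem} and does not prove (nor claim to prove). The authors only use it as motivating context; they even point out later that Theorem~\ref{t:SunEKRnon} does not refute it because $\cP$ is not subset-closed. So no proof was expected here, and your write-up --- to its credit --- does not actually deliver one either.

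That said, since your text is framed as a ``proof proposal,'' the concrete gap should be named: it is exactly the transfer step you flag yourself. The compression $S_{ij}$ need not preserve the intersecting property of an arbitrary subfamily (an image $S_{ij}(A)$ and an unmoved $B$ with $A\cap B=\{j\}$ can become disjoint), so the inequality $m(S_{ij}(\cF))\ge m(\cF)$ is not available, and without it the reduction to the shifted case (where Chv\'atal's own partial result applies) does not close. Your inductive and LP-relaxation alternatives likewise stop at the point where the cross-intersecting bound or the integrality of the relaxation would need to be established, and those are precisely the unknowns. In short: your assessment of the obstacles is accurate and consistent with the literature, but the statement remains unproven --- by you, by the paper, and by everyone else --- so this should be presented as a discussion of an open conjecture, not as a proof attempt.
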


One of the more well-studied families along these lines is the family of independent sets of a graph $G$, which we denote by $\cI(G)$.
This study was formally initiated by Holroyd, Spencer and Talbot in \cite{HolSpeTal}, although earlier traces of the idea are found in \cite{Berge,BollLead,DezaFran}.
In \cite{HolrTalb} we find the following conjecture.
Let $\m(G)$ denote the {\it independent domination number} of $G$; i.e., the minimum size of a maximal independent set of $G$.

\begin{conj}
\label{c:HST}
\cite{HolrTalb}
Let $G$ be any graph and suppose that $1\le r\le \mu(G)/2$.
Then $G$ is $\cI^r$-\ekr, and is strictly so if $2 < r < \mu(G)/2$.
\end{conj}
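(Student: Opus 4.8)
The plan is to prove this by induction on $|V(G)|$, with the classical Erd\H{o}s-Ko-Rado theorem (Theorem~\ref{t:EKR}) serving as the base case. When $G$ is edgeless we have $\cI^r(G)=\binom{V(G)}{r}$ and $\mu(G)=|V(G)|$, so the hypothesis $r\le\mu(G)/2$ is exactly the range $|V(G)|\ge 2r$ in which Theorem~\ref{t:EKR} applies, while the strict inequality $r<\mu(G)/2$ matches the fact (noted just after that theorem) that uniqueness of the star extremum fails at $|V(G)|=2r$ but holds for $|V(G)|>2r$. Thus the conjecture is literally a graph generalization of EKR, and the inductive step should peel off one vertex and reduce to a smaller graph. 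Fix a maximum intersecting subfamily $\cA\subseteq\cI^r(G)$; the goal is $|\cA|\le\max_v|\cI^r_v(G)|$, together with the conclusion that $\cA$ must be a star when $2<r<\mu(G)/2$.

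Choose a vertex $v$ to remove --- natural candidates are a vertex of minimum degree, a vertex lying in some minimum maximal independent set, or a vertex maximizing $|\cI^r_v(G)|$ --- and split $\cA=\cA_v\sqcup\cA_{\bar v}$ into the members containing $v$ and those avoiding it. Any two members of $\cA_v$ already meet in $v$, and $\cA_v$ is in bijection with a family of independent $(r-1)$-sets of $G-N[v]$, so $|\cA_v|\le|\cI^r_v(G)|$ comes for free. The family $\cA_{\bar v}\subseteq\cI^r(G-v)$ is again intersecting; since deleting a vertex lowers $\mu$ by at most one, one would like to apply the inductive hypothesis to $G-v$, but this needs $r\le\mu(G-v)/2$, which can just barely fail when $\mu(G-v)=\mu(G)-1=2r-1$, so the induction must either be arranged so that the deleted vertex preserves $\mu$, or be strengthened to cover that boundary. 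Granting the inductive bound on $|\cA_{\bar v}|$, the core of the proof is a Hilton-Milner-type dichotomy: either $\cA$ is ``far'' from every star, in which case $|\cA_v|$ is small enough that $|\cA_v|+|\cA_{\bar v}|$ stays below $\max_w|\cI^r_w(G)|$; or $\cA$ is ``close'' to some star $\cI^r_u(G)$, in which case the cross-intersection between $\cA_v$ and $\cA_{\bar v}$ --- every member of the one must meet every member of the other --- forces $\cA\subseteq\cI^r_u(G)$. The hypothesis $r\le\mu(G)/2$ is precisely what makes this near-star case rigid: it ensures that enough independent $r$-sets pass through each vertex that a deficient star cannot be rescued by appending stray non-star members, and it is what should yield strictness once $r<\mu(G)/2$.

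I expect the main obstacle to be that the compression and shifting machinery underlying proofs of Theorem~\ref{t:EKR} is simply unavailable here: replacing an element of an independent set by a lower-indexed vertex can create an edge, so $\cA$ cannot be normalized to a shifted family, and the Hilton-Milner dichotomy above must be run by bare hands. The two genuinely delicate points are (i) the bookkeeping in $\mu$ --- the induction must survive the transition $\mu(G-v)=\mu(G)-1$, and separately one must handle the boundary value $r=\mu(G)/2$ and the small cases $r\le 2$, in which stars need not be the unique extrema; and (ii) bounding the cross-intersecting pair $(\cA_v,\cA_{\bar v})$ over a graph $G-v$ that carries no exploitable symmetry. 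A realistic approach is therefore to run this program only for classes that admit a surrogate for shifting --- chordal graphs, trees, or graphs with $\mu(G)$ large relative to $r$ --- which is the same kind of class-by-class analysis we carry out in this paper for path families in suns and theta graphs.
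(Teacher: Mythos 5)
You are attempting to prove Conjecture~\ref{c:HST}, the Holroyd--Talbot conjecture, which this paper does \emph{not} prove: it is quoted from \cite{HolrTalb} as an open problem, and the paper only points to a literature of partial results (\cite{BorgFegh,FegHurKam,FranHurl,Hurlbert}) before moving on to a different object entirely (families of paths rather than independent sets). So there is no proof in the paper to measure your attempt against, and your proposal --- as you in effect concede in your closing paragraph --- is a program rather than a proof. The concrete gaps are exactly the ones that have kept the conjecture open. First, the decomposition $\cA=\cA_v\sqcup\cA_{\bar v}$ with $|\cA_v|\le|\cI^r_v(G)|$ and an inductive bound on $|\cA_{\bar v}|$ gives, when added, something strictly larger than $\max_w|\cI^r_w(G)|$; the entire content of the argument would have to live in the ``Hilton--Milner-type dichotomy'' you invoke to close that gap, and you neither state that dichotomy precisely nor prove either horn of it. In particular, the claim that a family ``far from every star'' has $|\cA_v|$ small enough is a quantitative cross-intersecting estimate over the graph $G-N[v]$, for which no general tool exists (as you note, shifting is unavailable). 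Second, the induction itself is not well-founded as described: deleting a vertex can drop $\mu$ below $2r$, and you flag the case $\mu(G-v)=2r-1$ as something the induction ``must either be arranged'' to avoid ``or be strengthened to cover,'' without doing either; there are graphs in which \emph{every} vertex deletion decreases $\mu$, so the escape hatch of choosing a $\mu$-preserving vertex is not always available. Third, strictness for $2<r<\mu(G)/2$ is asserted to ``come from rigidity'' but no argument is given.

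None of this is a criticism of the framing --- the base case observation (edgeless graphs recover Theorem~\ref{t:EKR} exactly, including the failure of uniqueness at $r=\mu/2$) is correct and is the standard sanity check, and vertex-deletion inductions of this shape are indeed how the known partial cases are proved. But a correct proof would have to supply the cross-intersecting lemma and the $\mu$-bookkeeping for an arbitrary graph, and your proposal explicitly retreats to ``classes that admit a surrogate for shifting.'' That retreat is the honest position: the general statement remains a conjecture, and what you have written is a description of why it is hard, not a proof that it is true.
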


There is a growing literature affirming this conjecture --- see \cite{BorgFegh,FegHurKam,FranHurl,Hurlbert} for recent examples and a survey containing history and past references.

Simonovits and S\'os \cite{SimoSos1,SimoSos2} introduced the problem of finding the size of the largest family of subgraphs of the complete graph $K_n$ such that the intersection of every pair is in a given set $L$ of graphs.\footnote{We note the subtle difference here between a family of paths, and a family of vertex sets of paths. In this paper, we consider families of paths, though for trees, cycles, suns, and theta graphs this distinction makes no difference since any $r$-set of vertices contains at most one path.}
They proved results on several classes for $L$, such as $k$-vertex graphs, complete graphs, stars, connected graphs, paths, and cycles.
Ellis, Filmus, and Friedgut \cite{EllFilFri} solved the problem when $L$ is the family of triangles ($K_3$).
Recently, Frankl, et al. \cite{FHIKLMP} proved results when the family of subgraphs of $K_n$ is restricted to spanning trees and $L$ is the set of all edges.
In this paper, we expand our attention to subgraphs of any graph $G$ of interest, and then restrict our attention to the case that $\cF\in\{\cP(G),\cP^r(G)\}$; i.e., $\cF$ is either the family of all paths in a graph $G$, or the family of all paths on $r$ vertices in $G$, with $L$ being the set of all vertices of $G$.
Observe that, unlike $\cI(G)$, $\cP(G)$ is not subset-closed, as not all subgraphs of paths are paths.
This difference is critical because it prohibits the use of algebraic techniques that apply to simplicial complexes, for example.
In particular, this problems sits outside of the realm for Chv\'atal's Conjecture.
Indeed, Theorem \ref{t:SunEKRnon} provides an instance of a non-\ekr\ family in our scheme.

In fact, we will tacitly assume that $G$ is connected throughout, since an intersecting subfamily must always live inside a component of $G$.
For $x\in\cup\cF$ define $\deg(x)=|\cF_x|$, and denote $\D(\cF)=\max_{x\in\cup\cF}\deg(x)$.
More generally, we write $\D_s(\cF)=\max_{|X|=s}|\cF_X|$.
We say that $\cF$ is {\it $r$-uniform} if $|A|=r$ for all $A\in\cF$ (i.e., $\cF=\cF^r$), and write $\cF-x=\{A-\{x\}\ \mid\ A\in\cF\}$. 
A {\it transversal} of a family $\cF$ is a set $X$ such that $X\cap F\neq\emptyset$ for each $F\in\cF$; we denote by $\t(\cF)$ the minimum size of a transversal of $\cF$, called the {\it transversal number} of $\cF$.
The set $\cup\cF$ is the {\it trivial} transversal of $\cF$; all others are {\it nontrivial}.

Theorem \ref{t:EKR} can be expressed using this transversal language; it says that, when $n$ is large enough in terms of $r$, among the maximum size intersecting families is a family with transversal number one.  In general, an \ekr-type theorem gives a condition under which there is guaranteed to be a maximum size intersecting family with transversal number one.

Hilton and Milner considered the problem of determining the size of the largest non-star intersecting family under the same conditions as the \ekr\ Theorem. 
Let $X=\{2,\ldots,r+1\}$ and define $\cM=\{H\in\binom{[n]}{r}_1 \mid H\cap X\not=\mt\} \cup \{X\}$.
Furthermore, define $\cK=\{H\in\binom{[n]}{r} \mid |H\cap [3]|\ge 2\}$.
In \cite{HiltMiln} they proved the following theorem.

\begin{theorem}[\cite{HiltMiln}]
\label{t:HM}
If $n\ge 2r$, $\cF\subseteq \binom{[n]}{r}$ is intersecting, and $\cF$ is not a star then $|\cF|\le |\cM| = \binom{n-1}{r-1}-\binom{n-r-1}{r-1}+1$.
Additionally, equality holds if and only if $\cF\cong\cM$ or $r=3$ and $\cF\cong\cK$.
\end{theorem}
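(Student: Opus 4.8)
The plan is to run the classical two‑case argument. First I would reduce to the case that $\cF$ is a \emph{maximal} intersecting $r$‑uniform subfamily of $\binom{[n]}{r}$: any nonempty subfamily of a star is again a star, so passing to a maximal intersecting family containing $\cF$ keeps $\cF$ non‑star while only increasing $|\cF|$. Being non‑star is equivalent to $\cap\cF=\mt$, hence $\t(\cF)\ge2$. For $v\in\cup\cF$ write $\cF_v$ for the full star of $\cF$ at $v$ and $\cB_v=\cF\setminus\cF_v$; non‑star‑ness gives $\cB_v\ne\mt$ for every $v$. The easy case is that some $v$ has $|\cB_v|=1$, say $\cB_v=\{B\}$. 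Every member of $\cF_v$ contains $v\notin B$ and meets $B$, so $|\cF_v|\le\binom{n-1}{r-1}-\binom{n-r-1}{r-1}$ and hence $|\cF|=|\cF_v|+1\le|\cM|$; tracing equality forces $\cF_v$ to be \emph{all} $r$‑sets through $v$ that meet $B$, which after relabelling $v\mapsto1$ and $B\mapsto\{2,\ldots,r+1\}$ is precisely $\cM$.

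The substance is the case $|\cB_v|\ge2$ for \emph{every} $v$; here the claim is $|\cF|<|\cM|$ unless $r=3$ and $\cF\cong\cK$. Fixing a $v$ of maximum degree and two distinct members $B_1,B_2\in\cB_v$, the bound on $|\cF_v|$ improves (its members now meet two distinct $r$‑sets avoiding $v$), while $\cB_v$ is itself an intersecting $r$‑uniform family on $n-1$ points and can be bounded either by induction on $r$ — casing on whether $\cB_v$ is a star — or directly. Balancing the gain in the first estimate against the loss in the second, together with $n\ge2r$, should yield the strict inequality once $r\ge4$. The small cases go by hand: $r\le2$ is trivial ($r=1$ is vacuous, $r=2$ forces $\cF$ to be a triangle $\cong\cM$), and $r=3$ needs a short direct analysis — pick two members of $\cF$ meeting in a single vertex and classify how the rest of $\cF$ can fail to be a star — which is exactly where the extra extremal family $\cK$ appears, with $|\cK|=1+3(n-3)=|\cM|$ when $r=3$.

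The main obstacle is entirely in this last case: squeezing the two competing estimates on $|\cF_v|$ and $|\cB_v|$ hard enough to obtain a \emph{strict} bound below $|\cM|$ for all admissible $n$ simultaneously, and — harder still — running the equality analysis finely enough to be certain that $\cM$ and (only when $r=3$) $\cK$ are the sole extremal configurations. An alternative route replaces the ad hoc counting by compressions: after applying left‑shifts, a shifted intersecting family is either a star or contains $\{2,\ldots,r+1\}$, after which the structure is rigid enough to finish directly; the caveat is that shifting can in principle collapse a non‑star family to a star, so this must be checked or circumvented before the shifted structure theorem can be invoked.
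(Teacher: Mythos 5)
This statement is the classical Hilton--Milner theorem, which the paper cites from \cite{HiltMiln} and does not prove, so there is no in-paper argument to compare yours against; I can only assess your proposal on its own terms. The reduction to a maximal family, the dichotomy on $\min_v|\cB_v|$, and the easy case $|\cB_v|=1$ (where $|\cF|=|\cF_v|+1\le\binom{n-1}{r-1}-\binom{n-r-1}{r-1}+1$ and equality pins down $\cM$) are all correct and standard.

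The genuine gap is exactly where you flag the ``main obstacle'': in the case $|\cB_v|\ge2$ for every $v$, the balancing you describe does not close as stated. Requiring the members of $\cF_v$ to meet both $B_1$ and $B_2$ improves the bound on $|\cF_v|$ by at most $\binom{n-r-1}{r-1}-\binom{n-2r+s-1}{r-1}$ (where $s=|B_1\cap B_2|$), but the only control you offer on $|\cB_v|$ is that it is an intersecting $r$-uniform family on $n-1$ points, which by Erd\H{o}s--Ko--Rado only gives $|\cB_v|\le\binom{n-2}{r-1}$ --- far larger than the gain, so the two estimates do not balance without a substantially finer structural analysis (this is precisely the content of the original Hilton--Milner induction, or of the Frankl--F\"uredi shifting proof, or of the Katona-cycle/injection proofs). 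The uniqueness statement, including the appearance of $\cK$ only at $r=3$, is likewise asserted rather than derived. Your alternative route via compressions is viable, and you correctly note the caveat that shifting can collapse a non-star to a star (the standard remedy is to stop the shifting process at the last step before the family becomes a star, or to work with the shifted structure theorem that a shifted non-star intersecting family contains $\{2,\dots,r+1\}$ or a set meeting $[r+1]$ suitably), but that route is also only sketched. As written, the proposal is a correct roadmap with the decisive case unproved.
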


Since $\cM$ is not a star, the Hilton-Milner Theorem states that, for large enough $n$, among the intersecting families with transversal number at least two, $\cM$ has maximum size.

In Section \ref{s:HardPaths} we prove some \ekr\ results for paths in graphs, beginning with a few elementary results, followed by our main \ekr\ theorems for paths in {\it sun} graphs and {\it theta} graphs.
A sun graph is formed from a cycle by adding a fixed number of pendants to each vertex, and theta graphs are unions of internally vertex disjoint paths between two fixed vertices  (see their formal definitions in Subsections \ref{ss:Suns} and \ref{ss:Theta}).
We prove in Theorems \ref{t:Sun-S-IntersectingEKRunif} and \ref{t:ThetaEKR}, respectively, that each of these classes are \ekr, for the appropriate range of path lengths, and also that suns are $s$-\ekr\ similarly.
Then we present in Section \ref{s:Trans} some transversal results involving what we call triangular families, including a few results for projective planes.
We finish with several interesting questions in Section \ref{s:Quests}.


\section{Path \ekr\ Results}
\label{s:HardPaths}


\subsection{Elementary Results}
\label{ss:EasyPaths}

For a graph $G$, recall that $\cP=\cP(G)$ is the set of all paths in $G$, and $\cP^r=\cP^r(G)$ is the set of all paths in $G$ with exactly $r$ vertices. 
The following theorem was proved by Berge \cite{BergeGraphs}.

\begin{theorem}[\cite{BergeGraphs}]
\label{t:trees}
Let $T$ be a tree.
Then every intersecting family of paths in $T$ is a star.
\hfill $\Box$
\end{theorem}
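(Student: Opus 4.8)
The plan is to use a Helly-type property of paths (equivalently subtrees) in a tree. First I would recall the classical fact that subtrees of a tree satisfy the Helly property: if a finite collection of subtrees of a tree pairwise intersect, then they share a common vertex. A path in $T$ is a subtree, so this immediately gives what we want — but since the excerpt gives us the freedom to assume only results stated earlier, I would prove the needed special case directly rather than quote it.

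Here is the direct argument I would carry out. Let $\cF\subseteq\cP(T)$ be an intersecting family; we may assume $\cF$ is nonempty and, by the remark in the excerpt that an intersecting family lives in one component, that $T$ is connected. Root $T$ at an arbitrary vertex $\rho$. For each path $P\in\cF$, let $a_P$ be the vertex of $P$ closest to the root (the apex of $P$); since $P$ is a path in a tree, $P$ has a unique such vertex, and every vertex of $P$ lies in the subtree hanging below $a_P$, with $a_P$ an ancestor of every vertex of $P$. Now choose $Q\in\cF$ whose apex $a_Q$ is at maximum depth among all apices of members of $\cF$. I claim $a_Q\in\bigcap\cF$. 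Indeed, take any $P\in\cF$. Since $P\cap Q\neq\emptyset$, pick $v\in P\cap Q$. Then $a_Q$ is an ancestor of $v$ (as $v\in Q$) and $a_P$ is an ancestor of $v$ (as $v\in P$), so $a_P$ and $a_Q$ are comparable in the ancestor order; because $a_Q$ has maximum depth, $a_P$ is an ancestor of $a_Q$. Thus $a_Q$ lies on the unique $\rho$–$v$ path, which passes through $a_P$ and then descends to $v$ through $P$; hence the portion of that $\rho$–$v$ path from $a_P$ to $v$ is contained in $P$, and $a_Q$, lying between $a_P$ and $v$ on it, belongs to $P$. Therefore $a_Q\in P$ for every $P\in\cF$, so $\cF=\cF_{a_Q}$ is a star.

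The only subtle point — the ``main obstacle'' — is justifying that $a_Q$ actually lies \emph{on the path} $P$, not merely that it is a common ancestor of $a_P$ and $v$; this is exactly where one uses that $P$ is a path (so its intersection with the $\rho$–$v$ geodesic is a subpath containing $v$ and going up to $a_P$) rather than an arbitrary connected subgraph. Once that observation is in place the argument is complete. As a remark, this shows trees are $\cP$-\ekr\ and $\cP^r$-\ekr\ trivially, since the whole intersecting family collapses to a star regardless of path lengths.
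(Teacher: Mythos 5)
Your proof is correct, but it takes a genuinely different route from the paper's. The paper argues by contradiction: if $\cF$ is not a star it extracts three paths $P_1,P_2,P_3$ with $P_1\cap P_2\cap P_3=\mt$, picks $x\in P_1\cap P_2$, $y\in P_1\cap P_3$, $z\in P_2\cap P_3$, and concatenates the connecting subpaths into a closed trail, which would contain a cycle --- impossible in a tree. Your argument is instead constructive: rooting $T$ and taking the member $Q$ whose apex $a_Q$ is deepest, you exhibit $a_Q$ as an explicit common vertex, and the verification that $a_Q\in P$ for each $P\in\cF$ is sound. Two observations on the comparison. First, your route sidesteps a step the paper leaves implicit: passing from ``$\cF$ is not a star'' to ``some three members have empty common intersection'' is the contrapositive of a Helly-type statement (every pairwise-intersecting triple has a common point implies all of $\cF$ does), which strictly speaking needs a short induction on $|\cF|$; your global argument produces the star center in one stroke and never reduces to triples. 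Second, your remark about where path-ness is ``exactly'' used is slightly misplaced: the only property of $P$ your argument invokes is that it is a connected subgraph of $T$, hence contains the geodesic between any two of its vertices (in particular the $a_P$--$v$ geodesic on which $a_Q$ lies), so your proof in fact establishes the stronger classical Helly property for arbitrary subtrees. What the paper's approach buys in exchange is reusability: the same closed-trail argument, with the trail length bounded by $3l$, gives Theorem \ref{t:girth} for graphs of large girth, where no rooting argument is available.
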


\begin{theorem}
\label{t:girth}
Let $G$ be a graph with girth $\gir(G)>3l$.
Then every intersecting family of paths of length at most $l$ in $G$ is a star.
\end{theorem}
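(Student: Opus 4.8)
The plan is to adapt the argument used for Theorem~\ref{t:trees}: reduce to three paths, and then use the girth hypothesis to confine those three paths to a tree. Suppose for contradiction that $\cF$ is an intersecting family of paths of length at most $l$ in $G$ that is not a star, so $\cap\cF=\mt$. The first step is a structural observation: since $\gir(G)>3l\ge 2l$, whenever two paths of length at most $l$ share two vertices $u$ and $v$, the $u$--$v$ subpaths they contain must coincide, for otherwise the union of those two subpaths would contain a cycle of length at most $2l$. Consequently $P\cap Q$ is a (possibly trivial) subpath of $P$ for every $P,Q\in\cF$. Now fix any $P\in\cF$ and look at the subpaths $\{P\cap Q:Q\in\cF\}$, which are intervals in the vertex chain of $P$; since $P$ has finitely many vertices this is a finite family of intervals, and its common intersection equals $P\cap(\cap\cF)=\mt$, so by the one-dimensional Helly property for intervals its members cannot pairwise intersect. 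Hence there are $Q,Q'\in\cF$ with $P\cap Q\cap Q'=\mt$; set $P_1=P$, $P_2=Q$, $P_3=Q'$.

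For the second step, consider the subgraph $H=P_1\cup P_2\cup P_3$ of $G$. It has at most $3l$ edges, so it contains no cycle, since any cycle of $H$ would be a cycle of $G$ of length at most $3l<\gir(G)$. It is also connected, because $P_1\cap P_2\ne\mt$ and $(P_1\cup P_2)\cap P_3\supseteq P_1\cap P_3\ne\mt$. Thus $H$ is a tree, and $\{P_1,P_2,P_3\}$ is an intersecting family of paths in $H$, so Theorem~\ref{t:trees} forces it to be a star --- contradicting $P_1\cap P_2\cap P_3=\mt$. (Alternatively, one can inline the closed-trail argument from the proof of Theorem~\ref{t:trees} in place of citing it.) It follows that $\cF$ was a star after all.

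I expect the reduction to three paths to be the only delicate point: the bare assertion that a non-star intersecting family must contain three members with empty common intersection --- which the proof of Theorem~\ref{t:trees} uses without comment --- is actually false for families of paths in general graphs, and it becomes available here precisely because the girth bound forces each pairwise intersection to be a single subpath, which is what lets Helly for intervals kick in. The threshold $3l$ is best possible: on the cycle $C_{3l}$ the three arcs of length $l$ with consecutive endpoints form an intersecting family of paths of length $l$ that is not a star, and $\gir(C_{3l})=3l$.
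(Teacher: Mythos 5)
Your proof is correct, and its second half is essentially the paper's argument in disguise: the paper traverses $x\to y\to z\to x$ along $P_1,P_3,P_2$ to get a closed trail of length at most $3l$, hence a short cycle, while you observe that $P_1\cup P_2\cup P_3$ has at most $3l$ edges and so is acyclic, then invoke Theorem~\ref{t:trees}; these are interchangeable. The genuine difference is your first step. The paper's proof simply asserts that a non-star intersecting family contains three members with empty common intersection, and you are right that this is false for set families in general (all $(n-1)$-subsets of $[n]$, or Hamiltonian paths of the vertex-deleted subgraphs of $K_n$, are pairwise --- indeed triple-wise --- intersecting with empty total intersection), so the assertion needs justification. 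Your argument supplies it: since $\gir(G)>2l$, two paths of length at most $l$ cannot contain distinct subpaths between the same pair of vertices, so each $P\cap Q$ is an interval of $P$, and the one-dimensional Helly property then produces the desired triple from $\cap\cF=\mt$. This fills a real gap in the paper's proof (the same gap appears in the proof of Theorem~\ref{t:trees}, where the fix is the Helly property for subtrees of a tree), and your $C_{3l}$ example showing the threshold is sharp is a nice extra. One could quibble that you use $\cap\cF=\mt$ where the definition of ``star'' only gives that no single vertex lies in all members, but for a family of finite sets these are the same thing, so there is nothing to repair.
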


\begin{proof}
If the union of some set of $k$ paths of length at most $l$ contains a cycle of length at least $3l$, then $k\ge 4$, and therefore is not intersecting.
Hence the union of an intersecting family $\cF$ of paths of length at most $l$ is a tree (since it is connected).
Thus $\cF$ is a star by Theorem \ref{t:trees}.
\end{proof}

\begin{cor}
\label{c:TreeGirthEKR}
If $T$ is a tree then $T$ is $\cP$-\ekr.
If $\gir(G)>3r-3$ then $G$ is $\cP^r$-\ekr.
\end{cor}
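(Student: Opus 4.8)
The plan is to derive both statements directly from Theorems~\ref{t:trees} and~\ref{t:girth}, each of which asserts that \emph{every} intersecting subfamily of the relevant path family is a star. First I would record the underlying principle in isolation: if $\cF$ is a (nonempty) family of subgraphs such that every intersecting $\cH\subseteq\cF$ is a star, then $\cF$ is \ekr. To see this, fix a vertex $x^*\in\cup\cF$ with $\deg(x^*)=\D(\cF)$; the full star $\cF_{x^*}$ is itself an intersecting subfamily of $\cF$. If $\cH\subseteq\cF$ is any intersecting family, then by hypothesis there is some $x\in\cap\cH$, so $\cH\subseteq\cF_x$ and hence $|\cH|\le|\cF_x|=\deg(x)\le\D(\cF)=|\cF_{x^*}|$. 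Thus the star $\cF_{x^*}$ witnesses that $\cF$ is \ekr.

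For the first statement I would apply this with $\cF=\cP(T)$, $T$ a tree: Theorem~\ref{t:trees} is exactly the hypothesis of the principle, so $T$ is $\cP$-\ekr. For the second statement I would apply it with $\cF=\cP^r(G)$. Since a path on $r$ vertices has length $r-1$, the assumption $\gir(G)>3r-3=3(r-1)$ is precisely the hypothesis of Theorem~\ref{t:girth} with $l=r-1$, so every intersecting family of paths of length at most $r-1$ in $G$ is a star; in particular every intersecting subfamily of $\cP^r(G)$ is a star. The principle then yields that $G$ is $\cP^r$-\ekr.

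I do not expect any real obstacle here: the corollary is immediate once the definition of \ekr\ is unwound and one observes that ``every intersecting subfamily is a star'' forces a maximum-size intersecting subfamily to be a star at a vertex of maximum degree. The only minor points to attend to are (i) ensuring $\cup\cF\neq\mt$ so that a maximum-degree vertex exists (which holds whenever $T$, respectively $G$, contains at least one path of the relevant kind), and (ii) the bookkeeping translation between the ``length at most $l$'' phrasing of Theorem~\ref{t:girth} and the ``exactly $r$ vertices'' phrasing of the corollary via $l=r-1$.
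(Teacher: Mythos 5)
Your proposal is correct and matches the paper's (implicit) reasoning: the paper states this corollary without proof, treating it as immediate from Theorems~\ref{t:trees} and~\ref{t:girth} in exactly the way you spell out, namely that if every intersecting subfamily is a star then each is contained in some full star $\cF_x$, so a full star of maximum degree witnesses the \ekr\ property. Your bookkeeping of $l=r-1$ versus $3r-3$ is also exactly right.
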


Observe that the complete graph $K_3$ is not $\cP^2$-\ekr.
Moreover, any graph containing a triangle and having maximum degree 3 is $\cP^2$-\ekr, but not strictly so.
Among maximum degree 3 graphs, the converse is also true.

\begin{fact}
\label{f:edge}
If $\D(G)=3$ and $G$ is triangle-free then $G$ is strictly $\cP^2$-\ekr.
\end{fact}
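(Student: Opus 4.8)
The plan is to translate everything into the language of edges and then run the same closed-trail argument that powers Theorem~\ref{t:trees}. As a family of sets, $\cP^2(G)$ is nothing but $E(G)$: a path on two vertices is an edge $\{u,v\}$, two such paths intersect exactly when the edges share an endpoint, and the star $\cP^2_v$ is precisely the set of edges incident to $v$, so $\D(\cP^2)=\D(G)$. The strategy is to classify all intersecting subfamilies of $\cP^2$ completely, and then simply compare the sizes of the possible maximum families with the maximum star $\D(G)$.

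First I would establish the structure theorem: every intersecting family $\cF\subseteq \cP^2(G)$ is either a star or the edge set of a triangle. If $\cF$ is not a star, pick $P_1,P_2,P_3\in\cF$ with $P_1\cap P_2\cap P_3=\mt$; exactly as in the proof of Theorem~\ref{t:trees}, the pairwise intersection points $x,y,z$ are distinct and $P_1,P_2,P_3$ must be the three edges $xy,yz,zx$ of a triangle (a closed trail of length $3$, i.e.\ a $K_3$). Any further $e\in\cF$ meets each of $xy$, $yz$, $zx$, and a one-line case check (if $e$ had a vertex outside $\{x,y,z\}$ it would miss one of the three edges) forces $e\subseteq\{x,y,z\}$, hence $e\in\{xy,yz,zx\}$. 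So the only non-star intersecting families are triangles, of size $3$. In particular, if $G$ is triangle-free then every intersecting family is a star, so $G$ is (strictly) $\cP^2$-\ekr; alternatively one may cite Theorem~\ref{t:girth} with $l=1$, or Theorem~\ref{t:trees} when $G$ is a tree.

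It remains to treat the case where $G$ contains a triangle on $\{x,y,z\}$. Since $G$ is connected and $G\neq K_3$, there is a vertex $w\notin\{x,y,z\}$ adjacent to a triangle vertex, say $wx\in E(G)$; then $x$ has the three neighbours $y,z,w$, so $\D(G)\ge 3$, and a maximum star is at least as large as any triangle, giving that $G$ is $\cP^2$-\ekr. The main obstacle, and the heart of the argument, is the \emph{strictness}: one must rule out a triangle being a \emph{maximum} intersecting family, i.e.\ show $\D(G)\ge 4$ (so that the star strictly beats the triangle). Here the hypothesis $G\neq K_3$ must be used more carefully --- one wants to produce either two distinct external neighbours of a common triangle vertex, or otherwise force a vertex of degree $\ge 4$ --- and this is exactly the kind of small exceptional configuration (compare the $r=3$ case $\cK$ in the Hilton--Milner Theorem) that makes this step the delicate one, everything else being a direct consequence of the closed-trail method already in use.
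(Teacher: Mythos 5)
Your structure theorem and the non-strict conclusion are correct, and they are essentially the paper's argument run forwards: the paper likewise reduces everything to the observation that every intersecting subfamily of $\cP^2(G)$ is either a star or a triangle, but then argues contrapositively --- if $G$ is not $\cP^2$-\ekr\ then some triangle beats every star, so $\D(G)\le 2$, so $G$ is a path or a cycle, and Theorems \ref{t:trees} and \ref{t:girth} then force $G=K_3$. Your forward version (triangle-free graphs are immediate; otherwise connectivity and $G\ne K_3$ give $\D(G)\ge 3$, so a maximum star ties or beats the triangle) establishes the same thing.

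The genuine gap is exactly the step you flag and then defer: strictness. You correctly identify that one would need $\D(G)\ge 4$ whenever $G$ contains a triangle, but this cannot be extracted from $G\ne K_3$: for $K_3$ with a single pendant edge attached (the ``paw''), one has $\D(G)=3$, so the triangle is a maximum intersecting family that is not a star, and this graph is $\cP^2$-\ekr\ but \emph{not} strictly so. Thus the missing step is not merely delicate --- the needed claim is false, and your proof cannot be completed as written. It is only fair to add that the paper's own proof has the same defect: its contrapositive opens with ``suppose $G$ is not $\cP^2$-\ekr'' rather than ``not strictly $\cP^2$-\ekr'', and the inference that a triangle is \emph{bigger} than every star (hence $\D(G)\le 2$) is only available in the non-strict setting; for strictness one gets only $\D(G)\le 3$, which does not force $G=K_3$. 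Either the word ``strictly'' must be dropped from Fact \ref{f:edge}, or graphs such as the paw must be excluded; your attempt, by isolating precisely this point, makes the issue visible rather than resolving it.
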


\begin{proof}
We prove the contrapositive.
Suppose that $G$ is not strictly $\cP^2$-\ekr; that is, some maximum size intersecting family is not a star.
Because every intersecting family of edges of $G$ is either a triangle or a star, this implies that in this family some triangle is at least as large as every star --- i.e., $\D(G)\le 3$.
\end{proof}

Because of Fact \ref{f:edge} we will assume throughout the remainder of this manuscript that $r\ge 3$.


\subsection{Suns}
\label{ss:Suns}

\begin{figure}[]
    \centering
    \begin{subfigure}[b]{0.48\textwidth}
        \centering
\begin{tikzpicture}[scale=0.8,
    vertex/.style={circle, draw, minimum size=6mm, inner sep=0pt, font=\small}]
    \def \r {2.5} 
    \def \margin {7}
    \foreach \i [evaluate=\i as \ang using 90-\i*72] in {0,...,4} {
        \node[vertex] (v\i) at (\ang:\r) {$v_{\i}^0$};
        \draw ({72 * (\i - 1)+18+\margin}:\r)
            arc ({72 * (\i - 1)+18+\margin}:{72 * (\i)+18-\margin}:\r);
        \foreach \j [evaluate=\j as \rayang using \ang+40-20*\j] in {1,2,3} {
            \node[vertex] (v\i-\j) at (\rayang:4.2) {$v_{\i}^{\j}$};
            \draw (v\i) -- (v\i-\j);
        }
    }
\end{tikzpicture}
        \caption{The sun graph $S_5^3$.}
        \label{fig:sun_graph}
    \end{subfigure}
    \hfill 
    \begin{subfigure}[b]{0.48\textwidth}
        \centering
\begin{tikzpicture}[scale=0.8,
    vertex/.style={circle, draw, minimum size=6mm, inner sep=0pt, font=\footnotesize},
    strand label/.style={font=\footnotesize}]
    \node[vertex] (u) at (-4, 0) {$u$};
    \node[vertex] (v) at (4, 0) {$v$};
    \node[vertex] (w1-1) at (0, 2.5) {$w_{1,1}$};
    \draw (u) to [out=55, in=190] (w1-1);
    \draw (w1-1) to [out=-10, in=125] (v);
    \node[strand label] at (-2.5, 1.25) {$P_{(1)}$};
    \node[vertex] (w2-1) at (0, 0) {$w_{2,1}$};
    \draw (u) -- (w2-1) -- (v);
    \node[strand label] at (-2.5, -0.25) {$ P_{(2)}$};
    \node[vertex] (w3-1) at (-1.5, -2.5) {$w_{3,1}$};
    \node[vertex] (w3-2) at (1.5, -2.5) {$w_{3,2}$};
    \draw (u) to [out=-65, in=170] (w3-1);
    \draw (w3-1) to [out=-5, in=185] (w3-2);
    \draw (w3-2) to [out=10, in=-115] (v);
    \node[strand label] at (-2.5, -1.75) {$P_{(3)}$};
\end{tikzpicture}
        \caption{The generalized theta graph $\Theta(2,2,3)$.}
        \label{fig:theta_graph}
    \end{subfigure}
    \caption{Two structural graph representations.}
    \label{fig:SunTheta}
\end{figure}

Define the {\it sun} with $t$ rays $\snt$ to have vertex set $\{v_i^j \mid 0\le i<n, 0\le j\le t\}$, with edges $v_i^0v_{i+1}^0$, and $v_i^jv_i^0$ for all $i\in [n]$, and $0<j\leq t$.
We imagine the vertices $v_0^0, v_1^0, \ldots, v_{n-1}^0$ to be written clockwise around a circle --- see Figure \ref{fig:sun_graph}.

\begin{theorem}
\label{t:Sun-S-IntersectingEKRunif}
Let $3\le s+2\le r\leq \floor{\frac{n+s-1}{2}}$ and $t\ge 0$.
Then $\snt$ is $(\cP^r,s)$-\ekr, and strictly so when $r<(n+s-1)/2$ and it is not the case that $s=2$ and $t=1$.
Define the function
\[
f(r,s,t)= (r-s+1)+2t(r-s)+
\begin{cases}
\binom{t}{2}(r-s-1) = 3+4t+\binom{t}{2}&\rm{when\ }r=3,s=1,\\
t^2(r-s) = 4r-8 &\rm{when\ } r\ge 4, t=1, s=2, \rm{and}\\
t^2(r-s-1) &\rm{otherwise}.
\end{cases}
\]
If $\cF\subseteq\cP^r(\snt)$ is an $s$-intersecting family then $|\cF| \le f(r,s,t)$.
\end{theorem}

\begin{proof}
When $t=0$, the theorem is equivalent to Katona's Cycle Lemma \cite{Katona}, so we will assume throughout that $t>0$. 
We begin by displaying an $s$-intersecting family of maximum size, after which we will prove the upper bound.
Finally we will show that only a $s$-star can achieve that bound, with the exceptions of when $r=(n+s-1)/2$ or when $s=2$ and $t=1$.

Here we define an example of an $s$-intersecting family of maximum size.  
Let $C$ be the cycle in $S_n^t$, let $X=\{v_1^0,\ldots,v_{r-s}^0\}$, and consider the star $\cF^*$ formed by all $r$-paths containing $X$. 
There are $(r-s+1)$ such paths with both endpoints on $C$, $2t(r-s)$ paths with exactly one endpoint on $C$, and either $t^2(r-s-1)$ (when $r\ge 4$) or $\binom{t}{2}(r-s-1)$ (when $r=s+2=3$) paths with neither endpoint on $C$. 
Then $|\cF^*|=(r-s+1)+2t(r-s)+t^2(r-s-1)$ when $r\ge 4$, or $(r-s+1)+2t(r-s)+\binom{t}{2}(r-s-1)$ when $r=s+2=3$.

Now, we introduce some notation.
Let $\orP_i^{(j,k)}$ be the clockwise path of length $r-1$, beginning at $v_i^j$ and ending at $v_{i'}^k$, for the appropriate $i'$, and $\olP_i^{(j,k)}$ be the counter-clockwise path of length $r-1$, beginning at $v_{i+s-2}^k$ and ending at $v_{i'}^j$, for the appropriate $i'$.
Observe that for $r=3$ the definitions require that $i'=i$ and $j\not=k$ whenever both $j>0$ and $k>0$.
Note that this yields four types of paths, depending on whether or not each of $j$ and $k$ is zero.
More precisely, define the following four families that partition the set of all paths of length $r-1$ in $\snt$: 
\begin{itemize}
    \item $\cP^{(0,0)}=\{\orP_i^{(0,0)}\mid i\in [n]\}$,
    \item $\cP^{(0,1)}=\{\orP_i^{(0,k)}\mid i\in [n], 0<k\leq t\}$,
    \item $\cP^{(1,0)}=\{\orP_i^{(j,0)}\mid i\in [n], 0<j\leq t\}$, and
    \item $\cP^{(1,1)}=\{\orP_i^{(j,k)}\mid i\in [n], 0<j,k\leq t\}$.
\end{itemize}
Notice that, for each $i$, $j$, and $k$, $\orP_i^{(j,k)}=\olP_{i'}^{(k,j)}$ for some $i'$, and that, since $r\le \lfloor\frac{n+s-1}{2}\rfloor$,  $|\orP_i^{(j,k)}\cap\olP_i^{(j',k')}|<s$ for each $i,j,j',k$ and $k'$.  Using the indicator function $\mathbbm{1}_{x}$ ($=1$ if $x>0$ and $=0$ otherwise),
we define $I(j,k)=(\mathbbm{1}_j,\mathbbm{1}_k)$.
Then we see that a path $\orP_i^{(j,k)}\in \cP^{I(j,k)}$. 

Now we prove the upper bound.
Let $\cF$ be an $s$-intersecting family, and define $\cF^{(a,b)}=\cF\cap\cP^{(a,b)}$, for each $a,b\in\{0,1\}$.
Then $|\cF|=|\cF^{(0,0)}|+|\cF^{(0,1)}|+|\cF^{(1,0)}|+|\cF^{(1,1)}|$.
We calculate an upper bound on $|\cF|$ by providing upper bounds on each of these four subfamilies.
That is, given $a,b\in\{0,1\}$, we count the maximum number of paths in $\cF^{(a,b)}$.

We calculate $|\cF^{(a,b)}|$ as follows.
Select some $\orP_x^{(j,k)}\in\cF^{(a,b)}$ and consider the maximum number of paths that intersect $\orP_x^{(j,k)}$.
Any such path can be written as either $\orP_y^{(j',k')}$ or $\olP_y^{(j',k')}$ for some $v_y^0\in\{v_{x}^0,\ldots, v_{x+r-s}^0\}$ and $0\le j,j',k,k'\le t$.
Note for each such $v_y^0$ that, since $\cF^{(a,b)}$ is $s$-intersecting, $\cF^{(a,b)}$ has no two paths of opposite orientation $\orP_y^{(j',k')}$ or $\olP_y^{(j',k')}$; thus $\cF^{(a,b)}$ contains paths of only one of the orientations $\orP_y^{(j',k')}$ or $\olP_y^{(j',k')}$.

We count the number of clockwise oriented paths that intersect $\orP_x^{(j,k)}$ by summing the number of such paths $\orP_y^{(j',k')}$ over all $y$ such that $v_y^0\in\{v_{x}^0,\ldots, v_{x+r-s+1}^0\}$.
As we shall see below, the vertex $v_{x+r-s+1}^0$ will only matter in the case $s=2$, $t=1$, $(a,b)=(1,1)$.
We define $t(a,b)=\binom{t}{2}$ when $r=s+2=3$, and $t(a,b)=t^{a+b}$ otherwise.
Then set
\[
f^{(a,b)}(r,s,t) = (r-a-b-s+1)t(a,b).
\]

When $s=2$, $t=1$, and $(a,b)=(1,1)$, the number of non-pendant vertices $v_y^0\in\{v_{x}^0,\ldots, v_{x+r-1}^0\}$ equals $r-2$ --- the paths $\orP_x^{(1,1)}$ and $\orP_{x+r-1}^{(1,1)}$ 2-intersect because of the shared pendant vertex.
(The same occurs when $s=2$, $t=2$, and $(a,b)=(1,1)$, but the loss of additional choices for pendant vertices cancels any improvement.)
For any other choices of $s,t,a,b$, the number of non-pendant vertices $v_y^0\in\{v_{x}^0,\ldots, v_{x+r-s}^0\}$ equals $r-a-b-s+1$, and the number of choices for pendant vertices at each end of a path in $\orP_y^{(j',k')}$ equals $t(a,b)$.

If $s=2$, $t=1$, and $(a,b)=(1,1)$, we therefore have $|\cF^{(1,1)}|\le f^{(1,1)}(r,2,1)+1=r-2$ if $\cF$ is not a 2-star and $|\cF^{(1,1)}|\le f^{(1,1)}(r,2,1)=r-3$ if $\cF$ is a 2-star.
Otherwise, we have $|\cF^{(a,b)}|\le f^{(a,b)}(r,s,t)$,
and hence 
\begin{align}
|\cF|
&= |\cF^{(0,0)}|+|\cF^{(0,1)}|+|\cF^{(1,0)}|+|\cF^{(1,1)}|
\label{e:Fpart2}\\
&\leq (r-s+1)+2t(r-s)+t^2(r-s-1)
\nonumber
\end{align}
when $r\ge s+3$ or $r\ge 4$, with $|\cF|\le (r-s+1)+2t(r-s)+\binom{t}{2}(r-s-1) = 3+4t+\binom{t}{2}=f(r,s,t)$ when $r=s+2=3$, as noted above.
Hence $\snt$ is $\cP^r$-\ekr.

Next we show that if $r< (n+s-1)/2$ then only a star achieves the upper bound.
We will do this by proving that, with the exception of the case $s=2$, $t=1$, and $(a,b)=(1,1)$, each of the families $\cF^{(a,b)}$ is a star.
In addition, each of these families will have the same star center, 
which will force a star even in the cases when $s=1$ and $r=n/2$ or when $s=2$, $t=1$, and $(a,b)=(1,1)$.

Let $r< (n+s-1)/2$ and consider $\cF^{(a,b)}$ with $a,b\in\{0,1\}$ --- in the case that $r=(n+s-1)/2$ and $s=1$ then suppose that $(a,b)\not=(0,0)$.

First, suppose that it is the case that $s=2$, $t=1$, and $(a,b)=(1,1)$.
Assume that $|\cF|=f(r,2,1)$; then $|\cF^{(1,1)}|=f^{(1,1)}(r,2,1)=r-2$.
Suppose that $F\in\cF^{(1,1)}$.
Then $F=\orP_x^{(1,1)}$ for some $x$.
Because $r<\left\lfloor (n+1)/2\right\rfloor$, we have $\cF^{(1,1)}\not=\cP^{(1,1)}$, and so we may choose $F$ so that $\orP_{x-1}^{(1,1)}\not\in\cF^{(1,1)}$.
Note that $\olP_x^{(1,1)}\not\in\cF^{(1,1)}$.
Also, when $(r-2)\le (n+1)/2$, we know that $|\olP_i^{(1,1)}\cap\orP_{i}^{(1,1)}|<2$ for all $i$, and so at most one of $\{\olP_{i}^{(1,1)},\orP_{i}^{(1,1)}\}$ is in $\cF^{(1,1)}$ for all $i$.
Since $|\cF^{(1,1)}|=f^{(1,1)}(r,2,1)+1=r-2$, and $\orP_{x+3}^{(1,1)}\not\in\cP^{(1,1)}$, we must have exactly one of $\{\olP_{i}^{(1,1)},\orP_{i}^{(1,1)}\}$ in $\cF^{(1,1)}$ for each $x\le i\le x+r-3$.
Inductively on $i$, this forces $\cF^{(1,1)}=\{\orP_{i}^{(1,1)}\mid x\le i\le x+r-3\}$, which is not a 2-star.
In the case that $\cF^{(1,1)}$ is a 2-star, we would have $\orP_{x+2}^{(1,1)}\not\in\cP^{(1,1)}$, and so induction would force $\cF^{(1,1)}=\{\orP_{i}^{(1,1)}\mid x\le i\le x+r-4\}$, which is a 2-star on $\{v_{x+r-4}^0,v_{x+r-3}^0\}$.

Now suppose that it is not the case that $s=2$, $t=1$, and $(a,b)=(1,1)$.
Assume that $|\cF|=f(r,s,t)$;
then $|\cF^{(a,b)}|=f^{(a,b)}(r,s,t)$ for all $(a,b)$.
Suppose that $F\in\cF^{(a,b)}$.
Then $F=\orP_x^{(j,k)}$ for some $x$, $j$, and $k$, with $(a,b)=(\mathbbm{1}_j,\mathbbm{1}_k)$.
Because $r<\left\lfloor (n+s-1)/2\right\rfloor$, we have $\cF^{(a,b)}\not=\cP^{(a,b)}$, and so we may choose $F$ so that $\orP_{x-1}^{(j',k')}\not\in\cF^{(a,b)}$ for any $j',k'$.
Note that $\olP_x^{(j',k')}\not\in\cF^{(a,b)}$ for every $j',k'$.
Also, when $(r-a-b)\le (n+s-1)/2$, we know that $|\olP_i^{(j,k)}\cap\orP_{i}^{(j',k')}|<s$ for all $i, j, k, j',$ and $k'$, and so at most $t(a,b)$ of $\{\olP_{i}^{(j,k)},\orP_{i}^{(j',k')}\}_{j,k,j',k'}$ is in $\cF^{(a,b)}$ for all $i$.
Since $|\cF^{(a,b)}|=f^{(a,b)}(r,s,t)$,  and $\orP_{x+s}^{(j,k)}\not\in\cP^{(a,b)}$ for all $j$ and $k$, we must have exactly $t(a,b)$ of $\{\olP_{i}^{(j,k)},\orP_{i}^{(j',k')}\}_{j,k,j',k'}$ in $\cF^{(a,b)}$ for each $x\le i\le x+r-s-a-b$.
Inductively on $i$, this forces $\cF^{(a,b)}=\{\orP_{i}^{(j,k)}\mid x\le i\le x+r-s-a-b, I(j,k)=(a,b)\}$, which is an $s$-star on $\{v_{x+r-s-a-b}^0,\ldots,v_{x+r-1-a-b}^0\}$.



To show that $\cF$ is a star, all $\cF^{(a,b)}$ must share a common star center.
Suppose that $\cF^{(a,b)}$ is a star at $X$, and $\cF^{(c,d)}$ is a star at $Y\not=X$.
By symmetry and relabeling of indices, we may assume that $X=\{v_0^0,\ldots,v_{s-1}\}$ and
$Y=\{v_y^0,\ldots,v_{y+s-1}^0\}$, for some $y\le n/2$.
Define $z$ to be the maximum index $i$ (allowing index $0$ to be renamed $n$ for this purpose) such that $\cF^{(a,b)}\supseteq \orP_i^{(j,k)}\not\ni v_{y+s-1}^0$ for some $j$ and $k$.
Because $r\le n/2$ we have $\orP_y^{(j',k')}\cap \orP_i^{(j,k)}\subseteq Y-\{v_{y+s-1}\}$ for some $j'$ and $k'$, and so $|\orP_y^{(j',k')}\cap \orP_i^{(j,k)}|<s$, a contradiction.
Hence $Y=X$.

Now suppose that $\cF^{(a,b)}$ is a star at $X$, and $\cF^{(0,0)}$ is not star (and therefore $s=1$ and $r=n/2)$.
Again, we may assume that $X=\{v_0^0\}$.
Thus there must be some $i$ such that $v_0^0\not\in\orP_i^{(0,0)}\in\cF^{(0,0)}$.
Now let $\orP_h^{(j,k)}\subseteq \olP_i^{(0,0)}$ for some $\orP_h^{(j,k)}\in\cF^{(j,k)}$ with $(j,k)\not=(0,0)$.
Then $\orP_0^{(j,k)}\cap\orP_i^{(0,0)}=\mt$, a contradiction.
Hence $\cF^{(0,0)}$ is a star.
\end{proof}

Additionally, a stronger result holds for bounded-length paths.

\begin{cor}
\label{c:PathDown}
Suppose that $S_n^t\not\subseteq S_3^1$.
Then $S_n^t$ is strictly $\cP^{\le k}$-\ekr\ for all $1\le k\le n/2$ and $t\ge 0$.
\end{cor}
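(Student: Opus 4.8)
The plan is to reduce the bounded-length statement $\cP^{\le k}$-\ekr\ to the uniform statement $\cP^r$-\ekr\ already proved in Theorem \ref{t:Sun-S-IntersectingEKRunif} (with $s=1$), together with the elementary results on edges. Let $\cF\subseteq\cP^{\le k}(S_n^t)$ be intersecting, and let $r$ be the largest path-order occurring in $\cF$, so $2\le r\le k\le n/2$. If $r=2$ then $\cF$ is a family of edges of $S_n^t\ne K_3$, and Fact \ref{f:edge} gives that $\cF$ is bounded by a star and, when $\cF$ is maximum, equals one; so from now on assume $r\ge 3$. The first step is to bound $|\cF|$ from above: every path in $\cF$ has order at most $r\le\lfloor n/2\rfloor$, so by the (ordinary, $s=1$) intersecting argument of Theorem \ref{t:Sun-S-IntersectingEKRunif}, the whole of $\cF$ is bounded by the maximum star among \emph{all} paths of order $r$ — but we must be a little careful, because shorter paths could in principle beat the order-$r$ count. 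The key monotonicity observation is that $\Delta(\cP^{r'})\le \Delta(\cP^{r})$ for $r'\le r\le\lfloor n/2\rfloor$: a vertex lies in at most as many short paths as long paths in a sun (indeed the star sizes $(r-s+1)+2t(r-s)+t^2(r-s-1)$ from Theorem \ref{t:Sun-S-IntersectingEKRunif}, specialised to $s=1$, are increasing in $r$ on this range), so the largest star in $\cP^{\le k}$ is the largest star in $\cP^{k}$, namely $\cP^{k}_{\{v_0^0\}}$-type.

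Next I would establish the matching upper bound $|\cF|\le\Delta(\cP^{k})$. Split $\cF=\bigcup_{(a,b)}\cF^{(a,b)}$ by the four pendant-types as in the proof of Theorem \ref{t:Sun-S-IntersectingEKRunif}. Fix a longest path $\orP_x^{(j,k)}\in\cF$ of order $r$; every member of $\cF$ meets it, so every member is supported on the at most $r$ base vertices $v_x^0,\dots,v_{x+r-2}^0$ (using $r\le n/2$, exactly as in the uniform proof, opposite-direction pairs at a common base vertex are forbidden). For each admissible base vertex we may keep paths of only one direction, and at most $t^{a+b}$ (or $\binom{t}{2}$ when the path has order $3$) pendant completions at each free end; crucially a \emph{shorter} path occupies \emph{fewer} of these base vertices and has no more pendant freedom than a length-$r$ path through the same center, so replacing each short path by the count it would contribute as a length-$r$ path only increases the bound. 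Summing gives exactly the expression of Theorem \ref{t:Sun-S-IntersectingEKRunif} with $s=1$ and path-order $r$, which is at most the same with $r$ replaced by $k$ by the monotonicity of the previous paragraph. Hence $|\cF|\le\Delta(\cP^{k})$, proving $\cP^{\le k}$-\ekr.

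For strictness, suppose $|\cF|=\Delta(\cP^{k})$. Then every inequality above is tight: in particular we must have used $r=k$ and every one of the four subfamilies $\cF^{(a,b)}$ must have achieved its individual maximum. Now I would invoke the structural part of the proof of Theorem \ref{t:Sun-S-IntersectingEKRunif}: with $s=1$ and $k\le n/2$, a maximum $\cF^{(a,b)}$ is forced to be a star unless $k=n/2$ and $(a,b)=(0,0)$, and the common-center argument there forces all four subfamilies to share one center $\{v_0^0\}$ — and then the final paragraph of that proof handles the exceptional $\cF^{(0,0)}$ when $k=n/2$. The one new point to check is that $\cF$ cannot reach the bound using a mix of path-orders: but tightness of the base-vertex count forces the longest path to occupy all $k$ base vertices and forces every other path to also occupy $k$ consecutive base vertices with full pendant freedom, i.e. to have order $k$; so $\cF\subseteq\cP^{k}$ and strictness follows from the uniform case. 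The main obstacle is precisely this last reduction — verifying carefully that no clever combination of shorter and longer paths can tie the record without all paths actually having order $k$ — but the per-base-vertex accounting makes each such tradeoff strictly lossy, so it goes through.
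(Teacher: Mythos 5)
There is a genuine error in your proposal: you are measuring against the wrong extremal family. For the $\cP^{\le k}$-\ekr\ statement, the relevant star at a vertex $v$ is $\cP^{\le k}_v$, the set of \emph{all} paths of order at most $k$ through $v$, whose size is $\sum_{r=1}^{k}|\cP^r_v|$ --- strictly larger than $\D(\cP^k)=|\cP^k_v|$. Your claim that ``the largest star in $\cP^{\le k}$ is the largest star in $\cP^k$'' is therefore false (already in $C_n$, where $|\cP^{\le k}_v|=k(k+1)/2$ versus $|\cP^k_v|=k$), and the upper bound $|\cF|\le\D(\cP^k)$ you set out to prove is refuted by the star $\cP^{\le k}_v$ itself, which is intersecting and lives in $\cP^{\le k}$. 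The same confusion sinks your strictness argument: you conclude that a maximum intersecting family must be uniform of order $k$, whereas the actual extremal family (the star $\cP^{\le k}_v$) contains paths of every order from $1$ to $k$, including the singleton path $\{v\}$.

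The fix is the paper's much shorter route, which goes in the opposite direction: layer a maximum intersecting $\cF^{\le k}$ into its uniform pieces $\cF^r=\cF^{\le k}\cap\cP^r$. Each $\cF^r$ is intersecting, so by Fact \ref{f:edge} and Theorem \ref{t:Sun-S-IntersectingEKRunif} it satisfies $|\cF^r|\le|\cP^r_v|$, and by symmetry all these uniform maximum stars may be centered at the same vertex $v=v_0^0$. Summing over $r$ gives $|\cF^{\le k}|\le\sum_{r\le k}|\cP^r_v|=|\cP^{\le k}_v|$, which is the \ekr\ bound. For strictness, equality forces $\cF^1$ to be the maximum $1$-uniform star, i.e.\ the singleton path on $v$ lies in $\cF^{\le k}$, and then every member of $\cF^{\le k}$ must contain $v$. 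None of your per-base-vertex accounting is needed, and your ``no mix of path orders can tie the record'' conclusion is the opposite of what is true.
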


\begin{proof}
For $k\le\floor{n/2}$, let $\cP^{\le k}$ be the family of all paths of length at most $k-1$ in $S_n^t$.
Let $\cF^{\le k}$ be a maximum intersecting family of $\cP^{\le k}$.
Then let $\cP^r$ and $\cF^r$ be the corresponding subfamilies of $r$-paths.
From Fact \ref{f:edge} and Theorem \ref{t:Sun-S-IntersectingEKRunif}, we know that $S_n^t$ is strictly $\cP^{r}$-\ekr\ for all $r<\lfloor n/2\rfloor$.
By symmetry, we may choose $v=v_0^0$ for the center of each maximum star $\cP^r_v$ of $\cF^r$ in that range.
Then we have
\[|\cF^{\le k}|
\le \sum_{1\le r\le k}|\cF^r| 
\le \sum_{1\le r\le k}|\cP^r_v|
= |\cP^{\le k}_v|.\]
If $|\cF^{\le k}|=|\cP^r_v|$ then, since $v\in \cF^1$, $v\in F$ for all $F\in\cF^{\le k}$.
\end{proof}

The following theorem shows that the \ekr\ property for the collection of all of its paths (rather than just those of uniform length) does not hold for suns other than the cycle.

\begin{theorem}
    \label{t:SunEKRnon}
    The sun $\snt$ is $\cP$-\ekr\ if and only if $t=0$, and not strictly so when $t=0$.
\end{theorem}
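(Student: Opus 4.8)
The plan is to split on whether $t=0$. The case $t=0$ is $\snt=C_n$, where every path is an \emph{arc}: either a \emph{proper arc} spanning between $1$ and $n-1$ vertices (uniquely determined by its vertex set, since an induced path on a proper set of consecutive cycle vertices is unique), or a \emph{Hamilton path}, of which there are exactly $n$, one for each omitted edge. Fix a vertex $v$; the star $\cP_v$ consists of the proper arcs through $v$ --- of which there are $c$ of each size $c$, $1\le c\le n-1$ --- together with all $n$ Hamilton paths, so $|\cP_v|=\binom n2+n=\binom{n+1}2$. For the upper bound I would observe that $A\mapsto V(C_n)\setminus A$ is a fixed-point-free involution on the $n(n-1)$ proper arcs and that $A$ is disjoint from its complement; hence an intersecting family of paths contains at most $n(n-1)/2=\binom n2$ proper arcs, plus at most $n$ Hamilton paths, i.e.\ at most $\binom{n+1}2$ paths in total. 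This equals $|\cP_v|$, so $C_n$ is $\cP$-\ekr.

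For non-strictness, I would exhibit a maximum intersecting family of paths in $C_n$ that is not a star: all $n$ Hamilton paths (which meet every path), together with every proper arc spanning more than $n/2$ vertices, and --- only when $n$ is even --- also the $n/2$ proper arcs spanning exactly $n/2$ vertices that pass through a fixed vertex $v_0$. Any two of these share a vertex, by pigeonhole on the spans (or, for two arcs of span $n/2$, because both contain $v_0$); a direct count gives size exactly $\binom{n+1}2$; and the family is not a star because the arcs spanning more than $n/2$ vertices already have empty common intersection.

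For $t\ge 1$ I would first record the shape of a path $P$ in $\snt$. As each pendant $v_i^j$ has degree one, it can only be an endpoint of $P$, so (apart from the $nt$ single-pendant paths) $P$ consists of a \emph{central arc} $c(P)$ --- a path on a set of consecutive central vertices $v_i^0$, of any size from $1$ to $n$, those of size $n$ being Hamilton paths of the central cycle, each coming with a choice of omitted central edge --- with at most one pendant appended at each of its two ends. For a fixed central vertex $v_0^0$, every path through it has $v_0^0\in c(P)$, and the number of pendant-appendings to a given central arc is $(t+1)^2$ when the arc has $\ge 2$ vertices and $1+t+\binom t2$ when it has one; summing over central arcs through $v_0^0$ exactly as in the $t=0$ count yields
\[
\D(\cP)=|\cP_{v_0^0}|=\Bigl(\tbinom{n+1}2-1\Bigr)(t+1)^2+\Bigl(1+t+\tbinom t2\Bigr)=\tbinom{n+1}2(t+1)^2-\tfrac{t(t+3)}2,
\]
where one also checks that a pendant-vertex star is no larger ($\cP_{v_i^0}$ contains at least $t$ paths missing a given $v_i^j$). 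Now take all paths $P$ with $|c(P)|>n/2$, together with (when $n$ is even) all paths whose central arc belongs to a fixed system of $n/2$ pairwise non-complementary arcs of $n/2$ central vertices: this family is intersecting (the relevant central arcs pairwise overlap), and it contains $(t+1)^2$ paths for each of the $\binom{n+1}2$ central arcs used, hence has size $\binom{n+1}2(t+1)^2>\D(\cP)$ for every $t\ge 1$. It is plainly not a star, so $\snt$ is not $\cP$-\ekr\ when $t\ge 1$.

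The heart of the argument is the complementation involution, which makes the $t=0$ upper bound immediate; after that, the work is bookkeeping. The main place to be careful is the count of $|\cP_{v_0^0}|$ for $t\ge1$: the $|c(P)|=1$ term genuinely differs from $(t+1)^2$ (this is exactly the $\tfrac{t(t+3)}2$ deficit that the central-vertex star suffers and the long-central-arc family avoids), and the $|c(P)|=n$ central arcs must be counted with their omitted edge. The other fussy point is the $n$-even case of both extremal families, where arcs of exactly half the vertices are needed to reach $\binom{n+1}2$ and one must select a system of them no two of which are complementary.
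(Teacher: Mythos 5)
Your proof is correct and follows essentially the same route as the paper's: the complementation involution $P\mapsto \bar P$ on proper arcs gives the $t=0$ bound $\binom{n+1}{2}$, and for $t\ge 1$ you decompose paths by their central arc (the paper's ``image'' on the cycle) and exploit the discrepancy between $(t+1)^2$ and $1+t+\binom{t}{2}$ at singleton arcs, exactly as the paper does. The only cosmetic differences are the choice of maximum non-star witness (your long-arc family versus the paper's swap of the singleton path at $v_0^0$ for its complement inside the star) and that the paper also establishes the exact maximum $\frac{n^2+n}{2}(t+1)^2$ for $t\ge 1$, whereas you only need to beat every star, which suffices.
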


\begin{proof}
Let $C$ be the cycle in $\snt$, and for any path $P$ on $C$ denote by $\Bar{P}$ the {\it path complement} of $P$ on $C$; i.e., the path induced by $V(C)-V(P)$.
Let $\cF$ be an intersecting family of maximum size.

Suppose that $t=0$.
Then $|\cP|=n^2$.
Given that $\cF$ is intersecting, for every $P\in\cP$ at most one of $P$ or $\Bar{P}$ is in $\cF$.
Note that if $P$ has length $n-1$ then $\Bar{P}$ does not exist.
Let $\cS_0$ be the star of paths that contain $v_0^0$.
Then $\cS_0$ consists of exactly one path from each pair $\{P,\Bar{P}\}$, and so $|\cS_0|=\frac{n^2-n}{2}+n = \frac{n^2+n}{2}$.
Thus when $t=0$, the intersecting family of paths is $\ekr$, and the maximum size is $\frac{n^2+n}{2}$.

Interestingly, the star is not the only intersecting family of maximum size.
One can define a Hilton-Milner-type family $\cH$ as follows.
Set $P_0$ to be the path on the singleton $v_0^0$, with $\Bar{P_0}$ defined as above.
Then $\cH := \cS_0 - \{P_0\} \cup \{\Bar{P_0}\}$ is an intersecting non-star with $|\cH|=|\cS|$.
Hence $\cS_n^0=C_n$ is \ekr\ but not strictly so.
In fact, for each pair $\{P,\Bar{P}\}$ of complementary paths of $C_n$, we can build an intersecting family of maximum size by choosing one of the two paths of size at least $n/2$.

Now suppose that $t>0$.
We proceed to show that $\snt$ is not \ekr.

For a path $Q\in\cP(\snt)$ we define its {\it image} $\im(Q)$ to be the graph intersection $Q\cap C$ (consisting of those vertices and edges in both $Q$ and $C$), and denote $\cP(P)=\{Q\in\cP\mid\im(Q)=P\}$, with $\cF(P)=\cP(P)\cap\cF$.
Also define the {\it image} of $\cF$ as $\im(\cF)=\{\im(Q)\mid Q\in\cF\}$.
Clearly $|\cP(P)| = (t+1)^2$ for all $P\in C$ having at least one edge.
If the path $P$ is a single vertex, however, we have $|\cP(P)| = \binom{t+1}{2}+1$.
Thus we obtain
\begin{equation}
    \label{e:LiftSize}
    |\cP(P)| \le (t+1)^2
\end{equation}
for all $P\in C$, with equality if and only if $P$ is not a singleton.
Of course, if $\cF$ is intersecting of maximum size, then we have that $|\cF(P)|=|\cP(P)|$ for all $P\in \cF$.
In particular,
\begin{equation}
    \label{e:Fsize}
    |\cF| = \sum_{P\in\im(\cF)} |\cP(P)|.
\end{equation}
Because $\cF$ is intersecting, so is $\cF':=\im(\cF)$.
From case $t=0$ above, we know that $|\cF'|\le \frac{n^2+n}{2}$, and so
\begin{equation}
    \label{e:UpBd}
    |\cF|\le \left(\frac{n^2+n}{2}\right)(t+1)^2.
\end{equation}

In the case that $\cF$ is a star, we have that $\cF'$ is also a star, say $\cS$, and so (\ref{e:LiftSize}) and (\ref{e:Fsize}) imply that 
\begin{align*}
    |\cF|
    &= \sum_{P\in\cS}|\cP(P)|\\
    &= \binom{t+1}{2} + 1 + \left(\frac{n^2+n}{2}-1\right)(t+1)^2\\
    &< \left(\frac{n^2+n}{2}\right)(t+1)^2.
\end{align*}

On the other hand, we can define $\cH^* = \cup_{P\in\cH}\cP(P)$.
Since $\cH$ is not a star, neither is $\cH^*$.
In addition, by (\ref{e:Fsize}) we have
\[
|\cH^*| = \sum_{P\in\cH}|\cP(P)| = \left(\frac{n^2+n}{2}\right)(t+1)^2.
\]

Hence $\snt$ is not $\cP$-\ekr\ when $t>0$.
\end{proof}

Observe that, while sharing a similar notion with Conjecture \ref{c:Chvatal}, the $t>0$ case of Theorem \ref{t:SunEKRnon} does not refute it.
This is because not every vertex-subset of a path is a path; that is, $\cP$ is not subset-closed.

We finish this section with the following Hilton-Milner \cite{HiltMiln} type result, determining both the size and structure of the maximum size non-star intersecting families on the cycle in the relevant range for $r$.

\begin{theorem}
\label{t:HilMilCycle}
Let $(n+3)/3\le r\leq n/2$ and suppose that $\cF\subseteq\cP^r(C_n)$ is an intersecting non-star family of maximum-size.
Then there exists $S=\{v_a,v_b,v_c\}\subset V(C_n)$ such that $\cF = \{F\in\cP^r(C_n)\ \mid\ |F\cap S|=2\}$; moreover $|\cF| = 3r-n$.
\end{theorem}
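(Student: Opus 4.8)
The plan is to work with the cyclic structure of $C_n$ and reduce everything to a statement about arcs. Label the vertices $v_0,\ldots,v_{n-1}$ around the cycle. Every $F\in\cP^r(C_n)$ is an arc of $r$ consecutive vertices, so it is determined by its starting vertex; write $F_i$ for the arc $v_i,v_{i+1},\ldots,v_{i+r-1}$ (indices mod $n$). Two arcs $F_i,F_j$ intersect iff their starting points are within distance $r-1$ along the cycle, and since $r\le n/2$ an arc and its complementary arc $\overline{F_i}$ (which has $n-r\ge r$ vertices) are the only disjoint pair up to the obvious band; more precisely $F_i\cap F_j=\emptyset$ exactly when $j\in\{i+r,\ldots,i+n-r\}$.

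First I would establish that a maximum non-star intersecting family $\cF$ must have transversal number exactly $2$ or $3$, and pin down the extremal configuration. The candidate family $\cF_S=\{F\in\cP^r(C_n)\mid |F\cap S|=2\}$ for a well-chosen $3$-set $S=\{v_a,v_b,v_c\}$ is the natural analogue of the Hilton–Milner family $\cK$ for $r=3$ from Theorem~\ref{t:HM}; the key point is to choose $S$ so that the three "gaps" between consecutive elements of $S$ around the cycle are as balanced as possible. Concretely, take $S$ with pairwise cyclic distances summing to $n$ and each gap at most $r$ (possible precisely because $r\ge (n+3)/3$, i.e. $3r\ge n+3$): then an arc of length $r$ covers $2$ of the three points iff it "spans" exactly one gap, and counting arcs that contain exactly one full gap plus a bit on each side gives $|\cF_S| = 3r-n$ after a short inclusion–exclusion. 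One should check $\cF_S$ is intersecting (any two arcs each meeting $S$ in two points must share one of the three points, since an arc missing a given $v\in S$ lies strictly inside the union of the two gaps adjacent to the opposite point) and that it is not a star (no single vertex lies in all of them, as each $v\in S$ is missed by some member).

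Next comes the upper bound: any intersecting non-star $\cF\subseteq\cP^r(C_n)$ has $|\cF|\le 3r-n$. Since $\cF$ is not a star, Theorem~\ref{t:trees}/\ref{t:girth}-type reasoning (or directly Corollary~\ref{c:CycleEKR}, which tells us the maximum intersecting family is a star of some size) forces three members with empty common intersection, hence a "triangle-like" obstruction on the cycle; the structural heart is to show that the starting-point set $I=\{i: F_i\in\cF\}$, viewed as a subset of $\zZ_n$, must be contained in the union of three short arcs of the index cycle (one for each "corner"), each of length at most $r-1$ minus the appropriate gap, and that having all three nonempty forces the total to be at most $3r-n$. The cleanest route is an exchange/shifting argument: order the members of $\cF$ cyclically and show that if $\cF$ is intersecting, non-star, and as large as possible, then for each pair of members the "witness" vertices can be taken inside a common $3$-point set $S$, so $\cF\subseteq \cF_S$; then maximality plus the count above gives both $|\cF|=3r-n$ and the stated structure. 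I would handle the boundary case $r=n/2$ (where complementary arcs have equal length) separately, since there the non-star extremal family might a priori also arise by swapping an arc for its complement as in the $t=0$ analysis of Theorem~\ref{t:SunEKRnon} — one must verify this does not beat $3r-n$.

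The main obstacle I expect is the structural step showing $\cF\subseteq\cF_S$ for a single triple $S$, i.e. ruling out "spread out" non-star families that are not captured by any three points. In the classical Hilton–Milner theorem this is exactly where the work lies, and the $r=3$ exceptional family $\cK$ is the shadow of it; here the cycle geometry should make it tractable, because intersecting arcs of length $r<n/2$ are quite rigid — but care is needed precisely near $r=(n+3)/3$, where the family $\cF_S$ is small ($3r-n$ can be as low as $3$), and near $r=n/2$, where complementation enters. A secondary technical nuisance will be the floor/parity bookkeeping in choosing the gaps of $S$ and in the final count $|\cF_S|=3r-n$; I would dispatch that with a direct lattice-point count rather than inclusion–exclusion if it proves cleaner.
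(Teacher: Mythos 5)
Your lower-bound half is fine: the construction of $\cF_S$ with the three gaps each of size at most $r-1$ (possible exactly when $3r\ge n+3$), the pigeonhole check that any two members share a point of $S$, and the count $(r-x)+(r-y)+(r-z)=3r-n$ all match what the paper does at the end of its proof. The problem is the upper-bound/structure half, which is where the entire content of the theorem lives and which you leave as an acknowledged ``main obstacle'' rather than an argument. Saying that one should ``order the members of $\cF$ cyclically and show that the witness vertices can be taken inside a common $3$-point set $S$, so $\cF\subseteq\cF_S$'' is a restatement of the conclusion, not a proof; nothing in your sketch explains how the triple $S$ is extracted from an arbitrary maximum non-star family, nor why no ``spread out'' family can beat $3r-n$. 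That is a genuine gap, not a technical nuisance.

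For comparison, the paper closes this gap as follows. Fix witnesses $A,B,C\in\cF$ with $A\cap B\cap C=\mt$; the Helly property of arcs forces $A\cup B\cup C=V(C_n)$. Set $X=A\cap B$, $Y=A\cap C$, $Z=B\cap C$ and partition $\cF$ into $\cA,\cB,\cC$ according to which of $Z,Y,X$ a member avoids (this is a partition because, with $r\le n/2$, no $r$-arc can meet all three of $X,Y,Z$ and none can miss two of them). Maximality makes each of $\cA,\cB,\cC$ a consecutive run of arcs, and the decisive step is to show that the three ``corner'' intersections $A'\cap B'$, $A'\cap C'$, $B'\cap C'$ (with $A'=\cap\cA$, etc.) are nonempty --- else one derives $n=2r-1\le n-1$ --- and are singletons --- else one could enlarge $\cF$ by appending the next arc in the run, contradicting maximality. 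Those three singletons are the set $S$, and only then does the count you describe apply. If you want to salvage your version, you need an argument at this level of detail; the ``exchange/shifting'' step you defer is precisely the nonemptiness-and-singleton analysis of the corners, and your worry about the boundary cases $r=(n+3)/3$ and $r=n/2$ is resolved automatically once that analysis is in place (the case $r=n/2$ needs no separate treatment).
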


\begin{proof}
Let $\cF\subseteq\cP^r(C_n)$ be an intersecting non-star family.
By Theorem \ref{t:trees}, we know that $\cup\cF$ is not a tree; i.e., $\cup\cF=C_n$.
Suppose that $\cF'$ is the smallest subfamily of $\cF$ such that $\cup\cF'=C_n$.
Choose any $F\in\cF'$, write $\cF''=\cF'-\{F\}$, and note that $\cup\cF''$ is a path.
Let $F_1,\ldots,F_k$ be the sequence of sets of $\cF''$ in clockwise order.
By minimality, $k=2$; hence we may write $\cF'=\{A,B,C\}$, and reiterate that $A\cap B\cap C=\mt$ and $A\cup B\cup C=V(C_n)$.
(Recall that these are intersections of vertices.)
Fixing such $A$, $B$, and $C$, we first define $X=A\cap B$, $Y=A\cap C$, and $Z=B\cap C$, and then define
\begin{itemize}
    \item 
    $\cA = \{D\in\cF\ \mid\ D\cap Z = \mt\}$ and $A' = \cap\cA$,
    \item 
    $\cB = \{D\in\cF\ \mid\ D\cap Y = \mt\}$ and $B' = \cap\cB$, and
    \item 
    $\cC = \{D\in\cF\ \mid\ D\cap X = \mt\}$ and $C' = \cap\cC$.
\end{itemize}
Note that $\cA$, $\cB$, and $\cC$ partition $\cF$ since no set of $\cF$ intersects all three of $A$, $B$, and $C$ (since $r\le n/2)$ and no set of $\cF$ misses two of $A$, $B$, and $C$ (e.g. if $D\cap X=\mt$ and $D\cap Y=\mt$ then $D\cap Z\not=\mt$).

For any set $D=\{v_d, v_{d+1}, \ldots, v_{d+r-1}\}\in \cP^r(C_n)$, where subscript arithmetic is performed modulo $n$, define for any $i$ the set $D+i = \{v_{d+i}, v_{d+1+i}, \ldots, v_{d+r-1+i}\}$.
Then, by maximality of $\cF$, we can choose some $A_0$, $B_0$, and $C_0$ so that $\cA = \{A_0+i\}_{i=0}^j$, $\cB = \{B_0+i\}_{i=0}^k$, and $\cC = \{C_0+i\}_{i=0}^l$, for some $j$, $k$, and $l$.  We shall also use $A_i=A_0+i$, $B_i=B_0+i$, and $C_i=C_0+i$, for $0\le i\le k$.

If $A'\cap B'=\emptyset$ then $A_0\cap B_k\subseteq C$; i.e., $A_0\cap B_k\cap C \neq \mt$.
But $A_0\cap Z =\mt$, and so there is some $f\le j$ and some $g\le k$ such that $|A_f\cap B_g\cap C|=1$.  This implies that $n\le 2r-1\le n-1$, a contradiction. Hence $A'\cap B'\not=\mt$; similarly, $A'\cap C'\not=\mt$ and $B'\cap C'\not=\mt$.

If $|A'\cap B'|>1$ then consider $B_{k+1}$.
We have $B_{k+1}\cap C\not=\mt$ since $B_{k+1}\cap C'\not=\mt$.
But also $B_{k+1}\cap A'\not=\mt$, which implies the contradiction that $\cF$ is not maximum.

Hence $|A'\cap B'|=1$; similarly, $|A'\cap C'|=1$ and $|B'\cap C'|=1$.
Therefore, we have $A'\cap B'=\{v_c\}$, $A'\cap C'=\{v_b\}$, and $B'\cap C'=\{v_a\}$ for some $S=\{a,b,c\}\subseteq V(C_n)$, and so $\cF=\{D\ \mid\ |D\cap S|=2\}$.

Finally, since $\cF$ is maximum, $\cA=\{D\in\cP^r\ \mid \{b,c\}\subseteq D\}$, $\cB=\{D\in\cP^r\ \mid \{a,c\}\subseteq D\}$, and $\cC=\{D\in\cP^r\ \mid \{a,b\}\subseteq D\}$.
Now let $x=(b-a)\mod{n}$, $y=(c-b)\mod n$, and $z=a-c\mod n$.
Then $|\cF| = |\cA| + |\cB| + |\cC| = (r-z) + (r-y) + (r-x) = 3r-n$.
\end{proof}


\subsection{Theta Graphs}
\label{ss:Theta}

Let $k\ge 2$ and $1\le a_1\le \cdots \le a_k$ be given, with $a_2\ge 2$.
We specify two vertices $u,v$, and for each $i$ let $P_{(i)}$ be the $u,v$-path $P_{a_i}=w_{i,0}w_{i,1}\cdots w_{i,a_i-1},$ $w_{i,a_i}$, with $u=w_{i,0}$ and $v=w_{i,a_i}$.
Then define the {\it generalized theta graph} $\Theta(a_1,\ldots,a_k)=\cup_{i=1}^k P_{(i)}$.
Each path $P_{(i)}$ is called a {\it strand} of $\Theta$, while $u$ and $v$ are called its {\it hubs} --- see Figure \ref{fig:theta_graph}.
Furthermore define the function
\[
f_k(r) = 
\begin{cases}
k+\binom{k}{2}(r-2)
&\text{ when } 3\le r\le a_1+1, \text{ and}\\
(r-a_1-2)(k-1)^2 + (a_1+2)(k-1) + (r-2)\binom{k-1}{2}
&\text{ when } a_1+2\le r\le a_2-1.\\
\end{cases}
\]

When $k=2$, the graph $\Theta(a_1,a_2)=C_{a_1+a_2}$ is a cycle, and the result is equivalent to Katona's Cycle Lemma \cite{Katona}, so we may assume that $k\ge 3$.
The case $r=2$ is already handled by Fact \ref{f:edge}, so we may assume that $r\ge 3$.

\begin{theorem}
\label{t:ThetaEKR}
Let $k\ge 3$ and $3\le r\leq (a_1+a_2+1)/2$.
Then $\Theta$ is strictly $\cP^r$-\ekr.
In particular, if $\cF$ is an intersecting subfamily of $\cP^r(\Theta)$ then $|\cF|\le f_k(r)$, with equality if and only if $\cF$ is a star centered on one of the hubs of $\Theta$.
\end{theorem}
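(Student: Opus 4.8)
The plan is to exhibit the extremal star explicitly, then bound an arbitrary intersecting family by splitting into the cases ``star'' and ``non-star'', and in the non-star case to localize the family around a cycle of $\Theta$ and appeal to the cyclic results. Since $k=2$ gives the cycle $C_{a_1+a_2}$, handled by Corollary \ref{c:CycleEKR}, I would assume $k\ge 3$; as noted after Fact \ref{f:edge} we may also take $r\ge 3$, and then $2r\le a_1+a_2+1$ forces $r$ into one of the two ranges in the definition of $f_k$. For the construction, observe that, because $r$ is this small, an $r$-path of $\Theta$ can traverse an entire strand only if that strand is strand $1$ and $r\ge a_1+1$; otherwise it lies in the union of suitable portions of at most two strands and has at most one hub in its interior. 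I would enumerate the $r$-paths through a fixed hub $u$ accordingly --- radial paths leaving $u$ along one strand, paths with $u$ interior whose two arms enter two distinct strands, and (when $a_1+2\le r\le a_2-1$) paths that use strand $1$ from hub to hub and then pick up a short arm into a long strand at $u$, at $v$, or at both --- and check that the totals come to $k+\binom{k}{2}(r-2)$ and $(r-a_1-2)(k-1)^2+(a_1+2)(k-1)+(r-2)\binom{k-1}{2}$ respectively, i.e. to $f_k(r)$, which gives an intersecting (indeed $1$-star) family of that size.

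\emph{The star case.} If $\cF$ is itself a star, then $\cF\subseteq\cP^r_x$ for some vertex $x$, so $|\cF|\le\deg(x)$, and it suffices to show $\deg(x)\le f_k(r)$ for all $x$, with equality only at a hub. For $x$ interior to a strand the $r$-paths through $x$ occupy a window of $2r-2$ consecutive vertices and meet at most two strands, so a short local count gives strictly fewer than $f_k(r)$ of them once $k\ge 3$; for $x$ a hub the count is $f_k(r)$ by the previous step. Hence a star can be extremal only if it is centered at a hub.

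\emph{The non-star case.} Suppose $\cF$ is intersecting but not a star. As in Theorem \ref{t:trees}, choose $A,B,C\in\cF$ with $A\cap B\cap C=\emptyset$ and points $x\in A\cap B$, $y\in A\cap C$, $z\in B\cap C$; the closed trail $x\to_A y\to_C z\to_B x$ contains a cycle, and every cycle of $\Theta$ is of the form $C_{ij}:=P_{(i)}\cup P_{(j)}$ for distinct $i,j$ (any cycle must pass through both hubs exactly once, hence consists of two strands). I would pick the witnessing triple so that $C_{ij}$ is as short as possible --- thus involving strand $1$ or strand $2$ --- and then argue that membership in $\cF$ is tightly constrained relative to $C_{ij}$: every $F\in\cF$ must meet a short sub-arc of $C_{ij}$ pinned down by $A,B,C$, so the sub-family of paths lying on $C_{ij}$ is controlled by the cyclic Hilton--Milner/\ekr\ statements (Theorem \ref{t:HilMilCycle}, Corollary \ref{c:CycleEKR}), while every other path of $\cF$ must pass through a hub and is controlled by the tree bound of Theorem \ref{t:trees} applied to $\Theta$ with an edge of $C_{ij}$ deleted. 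Adding the two contributions should give $|\cF|<f_k(r)$, contradicting extremality, so the only extremal families are hub-stars.

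\emph{Main obstacle.} The delicate step is the last one: one must rule out a non-star family that is simultaneously large on the cycle $C_{ij}$ \emph{and} collects many paths funneled through the hubs into the remaining $k-2$ strands, showing these two gains cannot coexist. I expect this to require a careful, possibly iterated, choice of the witnesses $A,B,C$ (forcing the cycle and both hubs into a rigid position) together with separate arguments in the two ranges $3\le r\le a_1+1$ and $a_1+2\le r\le a_2-1$, mirroring the two cases of $f_k$; the boundary values $r=a_1+1$ and $r=a_2-1$, and the theta graphs closest to $K_3$, will need individual checking.
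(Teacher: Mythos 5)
The reductions, the star counts at hubs and interior vertices, and the two ranges for $r$ all match the paper's opening moves, but the heart of the theorem is the non-star case, and there your argument has a genuine gap that you yourself flag as the ``main obstacle'' without resolving it. Concretely, the proposed decomposition --- bound the paths lying on the cycle $C_{ij}$ by the cyclic results, and bound ``every other path'' by Theorem \ref{t:trees} applied to $\Theta$ with an edge of $C_{ij}$ deleted --- does not work as stated: for $k\ge 3$, deleting one edge of $C_{ij}$ leaves a graph that still contains the cycles formed by the other $\binom{k-1}{2}$ pairs of strands, so it is not a tree and Theorem \ref{t:trees} gives nothing. Moreover the witnesses $A,B,C$ need not be contained in $C_{ij}$ (only the closed trail they generate contains that cycle), so the family is not actually localized to $C_{ij}$ plus hub-paths, and even where it is, a path through a hub can simultaneously lie on $C_{ij}$, so the ``two contributions'' overlap and cannot simply be added.

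What is missing is the mechanism that actually forces $|\cF|<f_k(r)$: the paper partitions $\cF$ according to whether a path contains both hubs, exactly one hub, or lies in a single strand's interior (so into pieces like $\cF_{(i)}\cup\cF_u^j\cup\cF_u^-\cup\cF_v^j\cup\cF_v^-$, or $\cF_u\cup\cF_v\cup\cF_{uv}$), chooses witnesses minimizing $|P'\cap P''|$ to pin $|P'\cap P''|=1$, and then uses exchange arguments --- replacing one path by $k-2$ new paths routed through the strands outside $C_{ij}$, which preserves the intersecting property and strictly increases the size when $k\ge 3$ --- to show a maximum non-star family must have very few paths in each piece; explicit inequalities in the two ranges of $r$ then close the count. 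Your sketch contains no analogue of these exchange steps, and without them there is no reason the cycle contribution and the hub contribution ``cannot coexist.'' So the proposal is an accurate outline of the setup but does not constitute a proof of the extremal statement.
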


\begin{proof}
Note that the condition on $r$ ensures that the intersection of two paths in $\cF$ is a path (i.e., their union does not contain a cycle).
The result is also true for $r<(a_1+a_2+3)/3$ by Corollary \ref{c:TreeGirthEKR}.
Set $\Theta=\Theta(a_1,\ldots,a_k)$.

First we calculate the sizes of stars in $\Theta$ to find which is the largest; i.e., write $\cP=\cP(\Theta)$ and find $|\cP_x^r|$ for all $x\in V(\Theta)$.
We begin by counting $|\cP_u^r|$, which equals
$|\cP_v^r|$ by symmetry.
\begin{itemize}
\item 
If $3\le r\le a_1+1$ then $|\cP^r_u|=k+\binom{k}{2}(r-2)$.
Indeed, there are $k$ paths in $\cP^r_u$ with endpoint $u$.
For any other $P\in\cP^r_u$, $u$ is one of the $r-2$ interior points of $P$, and the two subpaths of $P$ on opposite sides of $u$ lie in two different strands $P_{(i)}$ and $P_{(j)}$ for some $\{i,j\}\in\binom{[k]}{2}$.
\item 
If $a_1+1<r\le (a_1+a_2+1)/2$ then $a_1<r<a_2$.
For a path $P\in\cP$ with $v\in P$ we have $P_{(1)}\subset P$ and there are two cases.
If one endpoint of $P$ is $u$ or $v$ then the other endpoint is in one of the $k-1$ other strands, and so there are $2(k-1)$ such paths.
Otherwise, $u$ is one of $r-a_1-2$ vertices on the interior of $P$ and the two endpoints of $P$ are on any of the $k-1$ other strands independently, and so there are $(r-a_1-2)(k-1)^2$ such paths.
If $v\not\in P$ then there are two cases as well.
If $u$ is one endpoint of $P$ then the other endpoint is in one of the other strands, and so there are $k-1$ such paths.
Otherwise, $u$ is an interior vertex of $P$.
Here, we have two subcases.
If one endpoint of $P$ is in $P_{(1)}$ then it is one of the $a_1-1$ interior vertices of $P_{(1)}$ and the other endpoint is on any of the other $k-1$ strands, and so there are $(a_1-1)(k-1)$ such paths.
Otherwise, the two endpoints $x_1$ and $x_2$ are on any pair of paths $P_{(i)}$ and $P_{(j)}$ (respectively), with $1<i<j$, and $u$ is any of the $r-2$ interior vertices of $P$, and so there are $(r-2)\binom{k-1}{2}$ such paths.
Hence $|\cP_u^r| = 2(k-1) + (r-a_1-2)(k-1)^2 + (k-1) + (a_1-1)(k-1) + (r-2)\binom{k-1}{2}$.
\end{itemize}

Next we calculate $|\cP_{w_{i,j}}^r|$, with $j\not\in\{0,a_i\}$, and show that in each case it is strictly smaller than $|\cP_u^r|$.
By symmetry we may assume that $j\le (a_i+1)/2$.
\begin{itemize}
\item 
If $3\le r\le j+1$ then $|\cP_{w_{i,j}}^r|=r$ because $w_{i,j}$ could be any of the $r$ vertices of $P$, and $P\subseteq P_{(i)}$.
Because $k,r\ge 3$, we have $|\cP_u^r|= k+\binom{k}{2}(r-2) \ge 3+3(r-2)>r=|\cP_{w_{i,j}}^r|$, which handles the $r\le a_1+1$ case.
Similarly, when $r>a_1+1$, we have $|\cP_u^r|= (r-a_1-2)(k-1)^2 + (a_1+2)(k-1) + (r-2)\binom{k-1}{2} \ge 4(r-a_1-2)+2(a_1+2)+(r-2) = 5r-2a_1-6 > r$.
\item 
If $j+1<r\le a_i-j+1$ then $|\cP_{w_{i,j}}^r|=(r-j-1)(k-1)+(j+1)$ because $r-j-1$ of the paths $P$ extend beyond $u$ to included interior vertices of some other strand $P_{(h)}$, while the remaining paths are all contained in the strand $P_{(i)}$.
Note that $(r-j-1)(k-1)+(j+1) 
= r(k-1)-(j+1)(k-2)
< r(k-1) 
\le k + \binom{k}{2}(r-2)$ because $k\ge 3$.
Hence, in this case, $|\cP_{w_{i,j}}^r| < |\cP_u^r|$ when $r\le a_1+1$.
Additionally, $r\ge 3$ implies that $r(k-1) < 2(k-1)+\binom{k-1}{2}(r-2)$, and so $|\cP_{w_{i,j}}^r| < |\cP_u^r|$ when $r>a_1+1$ as well.
\item 
If $a_i-j+1<r\le a_1+1$ then $|\cP_{w_{i,j}}^r|=(2r-a_i-2)(k-1)+(a_i+2-r)$ because $a_i+2-r$ paths are contained entirely in the strand $P_{(i)}$, all of which contain $w_{i,j}$, while all others paths containing $w_{i,j}$ extend beyond $u$ or $v$ to include interior vertices of some other strand, and there are $(2r-a_i-2)(k-1)$ of them (i.e., $r-(a_i-r-2)$ positions remaining on an $r$-path for $w_{i,j}$ to occupy, and $(k-1)$ choices of strand for each to extend to). 
Now we aim to prove that $f_k(r)>|\cP_{w_{i,j}}^r|$.

Since $r,k\ge 3$, we know that 
$k(k-3)(r-2)+2(r-2) > 0$, 
which implies that 
$6k+2r+k^2(r-2) > 3rk+4$.
From this, one can show that
$8k+6r+(r-1)(2k-4)+k^2(r-2) > 5rk+8$, 
and the fact that $a_i\ge r-1$ yields
$8k+6r+(r-1)(2k-4)+k^2(r-2) > 5rk+8$.
Finally, this can be rewritten as
$2k+k(k-1)(r-2) > 2(2r-a_i-2)(k-1)+2(a_i+2-r)$, 
which is equivalent to 
$f_k(r)>|\cP_{w_{i,j}}^r|$.

\item 
If $a_1+2\le r\le (a_1+a_2+1)/2$, then  $|\cP_{w_{i,j}}^r|=(2r-a_i-2)(k-1)+(a_i+2-r)$ when $i\ge 2$, as above.
For $i=1$, we have $|\cP_{w_{1,j}}^r|=(r-a_1-2)(k-1)^2+(a_1+2)(k-1)$ because $r-a_1-2$ paths extend beyond both $u$ and $v$ to included interior vertices of two other strands $P_{(h)}$ and $P_{(h')}$ (possibly $h=h'$), while the remaining $a_1+2$ paths extend beyond either $u$ or $v$, exclusively.
Clearly, $|\cP_{w_{i,j}}^r|\le |\cP_{w_{1,j}}^r|$ and, since $f_k(r)$ doesn't depend on $i$, we only need to show that $|\cP_{w_{1,j}}^r|<f_k(r)$.

Indeed, $|\cP_{w_{1,j}}^r| = (r-a_1-2)(k-1)^2+(a_1+2)(k-1)<(r-a_1-2)(k-1)^2 + (a_1+2)(k-1) + \binom{k-1}{2}(r-2) = |\cP^r_u|$ for such $r$ because $k\ge 3$.
\end{itemize}

Second, let $\cF\subset \cP^r$ be a maximum intersecting family, and suppose that $\cF$ is not a star.
We proceed to prove that $|\cF|< f_k(r)$.
\begin{enumerate}
\item 
Suppose that no path in $\cF$ contains both $u$ and $v$.
Thus $\cF_u\cap\cF_v=\emptyset$.
\begin{enumerate}
\item 
Suppose that some path $P\in\cF$ lies entirely in the strand $P_{(i)}-\{u,v\}$ for some $i$.
Because $\cF$ is not a star there must be paths $P',P''\in\cF$ such that $P\cap P'\cap P''=\emptyset$, and so $P\cup P'\cup P''=P_{(i)}\cup P_{(j)}$, for some $j$.
We may suppose that $P'\in\cF_u$ and $P''\in\cF_v$ and that $P'$ and $P''$ are chosen so that $|P'\cap P''|$ is minimum among such pairs of paths with these properties.
From this is follows that $|P'\cap P''|=1$.
Indeed, otherwise define $Q$ by shifting $P'$ by one around the cycle $P_{(i)}\cup P_{(j)}$, toward $P$ and away from $P''$.
Now the pair $Q$ and $P''$ have the above properties with $|Q\cap P''| < |P'\cap P''|$, a contradiction.

Let $P'\cap P''=w_{j,x}$, for some $x$, and let the other endpoints of $P'$ and $P''$ be $w_{i,x'}$ and $w_{i,x''}$, respectively.
Define $\cF_{(i)}$ to be those paths of $\cF$ that are contained in the strand $P_{(i)}$.
(Note that $|\cF_{(i)}|\ge 1$ in this case.)
Then define $\cF_u^j$ to be the set of paths in $\cF_u-\cF_{(i)}$ that are contained in the cycle $P_{(i)}\cup P_{(j)}$, and let $\cF_u^-=\cF_u-\cF_u^j$; define $\cF_v^j$ and $\cF_v^-$ analogously.
Thus $\cF$ is partitioned into $\cF_u\cup\cF_v\cup\cF_{(i)}$ and, more finely, into $\cF_{(i)}\cup\cF_u^j\cup\cF_u^-\cup\cF_v^j\cup\cF_v^-$.

Of course, since every path in $\cF_{(i)}$ is contained in the strand $P_{(i)}$, we have 
\begin{equation}
\label{e:EasyBd}
1\le |\cF_{(i)}| \le a_i - r + 1,
\end{equation}
which implies that $a_i\ge r\ge 3$.
Moreover, every path in $\cF_{(i)}$ must contain both $w_{i,x'}$ and $w_{i,x''}$ as well, and so $|\cF_{(i)}| \le r - (x''-x')$.
Now notice that 
\begin{align*}
a_i+a_j 
&= |P_{(i)}\cup P_{(j)}|\\
&= |P'\cup P''| + (x''-1-x')\\
&= (2r-1) + (x''-x) - 1,
\end{align*}
and so 
\begin{equation}
\label{e:xprimes}
x''-x'= (a_i+a_j) - (2r-2),
\end{equation}
which yields 
\begin{equation}
\label{e:AltBd}
|\cF_{(i)}|\le 3r - (a_i+a_j) -2.
\end{equation}

Next, suppose that $|\cF_u^j|>1$ and let $Q\in\cF_u^j$ be such that, among paths in $\cF_u^j$, $|Q\cap P''|$ is maximum.
Let $w_{i,y}$ be the endpoint of $Q$ on the strand $P_{(i)}$.
Now consider the family $\cS$ of $(k-2)$ $r$-paths with endpoint $w_{i,y+1}$, containing $v$, and having their other endpoint not in the cycle $P_{(i)}\cup P_{(j)}$ (thus the $k-2$ choices of strand for the endpoint).
Because each path of $\cS$ is disjoint from $Q$, $\cS\cap\cF=\emptyset$.
Then the family $(\cF-\{Q\})\cup\cS$ is intersecting, not a star, and at least as large as $\cF$ (because $k\ge 3$).
Thus we may assume that $|\cF_u^j|=1$.
By the symmetric argument we may also assume that $|\cF_v^j|=1$.
At this point, we have that
\begin{align}
|\cF|
&= |\cF_{(i)}| + |\cF_u^j| + |\cF_u^-| + |\cF_v^j| + |\cF_v^-|\nonumber\label{e:Falmost}\\
&\le |\cF_{(i)}| + 2 + |\cF_u^-| + |\cF_v^-|,
\end{align}
and so it remains to estimate $|\cF_u^-|$ and $|\cF_v^-|$.

Each path in $\cF_u^-$ must contain both $u$ and $w_{i,x''}$, so that it intersects both $P$ and $P''$.
Moreover, if $x''>r-2$ then $\cF_u^-=\emptyset$, while if $x''\le r-2$ then, for each $x''\le h\le r-2$, there are exactly $k-2$ such paths having endpoint $w_{i,h}$.
Because $|\cF|$ is maximum, all of these would be in $\cF_u^-$.
Therefore $|\cF_u^-|= (r-1-x'')(k-2)$.
Similarly, if $x'<a_i-r$ then $\cF_v^-=\emptyset$, while if $x'\ge a_i-r$ then $|\cF_v^-|= (r+x'-a_i-1)(k-2)$.
Hence, in all cases we have 
\begin{align}
|\cF_u^-| + |\cF_v^-|
&= [(r-1-x'') + (r+x'-a_i-1)](k-2)\nonumber\\
&= [2r-2-a_i-(x''-x')](k-2)\nonumber\\
&= [2r-2-a_i-(a_i+a_j - 2r + 2)](k-2)\nonumber\\
&= (4r - 2a_i - a_j - 4)(k-2).
\label{e:Fminus}
\end{align}
Therefore, bounds \eqref{e:Falmost} and \eqref{e:Fminus} yield
\begin{equation}
|\cF| 
\le |\cF_{(i)}| + 2 + (4r - 2a_i - a_j - 4)(k-2).
\label{e:UpperBd}
\end{equation}

To show that the upper bound in \eqref{e:UpperBd} is at most $f_k(r)$, we consider the two cases for $r$ that define $f$.
First take $3\le r\le a_1+1$, which implies that $2r \le 2a_1+2 = (3/2)a_1 + (a_1/2+1) +1 < (3/2)a_i + a_j + 1$ since $a_1\le a_i$, $a_1\le a_j$, and $a_j\ge 3$.
From this, we obtain 
\begin{align*}
0 
< -2(r-2) + \frac{3}{2}a_i + a_j - 3
& = \left(\frac{2}{2}-\frac{7}{2}+\frac{1}{2}\right)(r-2) + \left(2-\frac{1}{2}\right)a_i + a_j - 3\\
& \le \left(\frac{k-1}{2}-\frac{7}{2}+\frac{1}{k-1}\right)(r-2) + \left(2-\frac{1}{k-1}\right)a_i + a_j - 3\\
& = \left(\frac{k}{2}-4+\frac{1}{k-1}\right)r + \left(2-\frac{1}{k-1}\right)a_i + a_j + 5 - \frac{2}{k-1} - k,
\end{align*}
because $k\ge 3$.
We continue, deriving
\[
\frac{a_i-r+3}{k-1} 
< \left(\frac{k}{2}-4\right)r + 2a_i + a_j + 4 + \frac{k}{k-1} - k,
\]
and so
\[
\frac{a_i-r+3}{k-1} + 4r - 2a_i - a_j - 4
< \frac{k}{k-1} + \frac{k}{2}(r-2).
\]
Finally, using the bound from \eqref{e:EasyBd}, we find that
\begin{align*}
|\cF|
&\le a_i-r+3 + (4r - 2a_i - a_j - 4)(k-2)\\
&\le a_i-r+3 + (4r - 2a_i - a_j - 4)(k-1)
< k + \binom{k}{2}(r-2),
\end{align*}
achieving the strict bound of $f_k(r)$ for this range of $r$.

Second, take $a_1+2\le r\le a_2-1$.
In this case we will show that $|\cF| \le f_k(r)$ by splitting each side into two parts and proving that 
\begin{enumerate}
    \item 
    $|\cF_{(i)}| + 2 \le \binom{k-1}{2}(r-2)+3$ and
    \item
    $(4r - 2a_i - a_j - 4)(k-2) < [(a_1+2) + (r-a_1-2)(k-1)](k-1)-3$.
\end{enumerate}
For part (i) recall that $r\le (a_1+a_2+1)/2$, and $k\ge 3$. We use \eqref{e:AltBd} to write $3r - (a_i+a_j) \le 3r - (a_1+a_2) \le 3r - (2r-1) = (r-2)+3 \le \binom{k-1}{2}(r-2)+3$.

For part (ii) we recall that $a_1+2\le r\le a_2-1$, which implies that $3\le a_2-a_1\le a_h-a_1$ for all $h$, so that $4<9\le 2(a_i-a_1)+(a_j-a_1)$; i.e., $3a_1+10 < 2a_i+a_j+6$.
Then
\begin{align*}
a_1(k-2)
&= (a_1+2)(k-5) - 2(k-5) + 3a_1\\
&\le r(k-5) - 2k + 3a_1 + 10\\
&< r(k-5) -2k + 2a_i + a_j + 6,
\end{align*}
so that $4r-2a_i-a_j-4 < (r-a_1-2)(k-1)+a_1$.
Therefore 
\begin{align*}
(4r - 2a_i - a_j - 4)(k-2)/(k-1) 
&< 4r - 2a_i - a_j - 4\\
&< (r-a_1-2)(k-1) + a_1\\
&< (r-a_1-2)(k-1) + (a_1+2) -3/2\\
&\le (r-a_1-2)(k-1) + (a_1+2) -3/(k-1),
\end{align*}
and hence $|\cF|<f_k(r)$ for this range of $r$ as well.

\item 
Now we know that no path in $\cF$ lies entirely in a single strand's interior $P_{(i)}-\{u,v\}$.
Thus every path in $\cF$ intersects $\{u,v\}$, and so $|\cF|=|\cF_u|+|\cF_v|$.
(Recall that $\cF_u\cap\cF_v=\emptyset$.)
Because $\cF$ is not a star, some path $P$ contains $u$ but not $v$, and some path $P'$ contains $v$ but not $u$; among such pairs, we choose one such that $|P\cap P'|$ is minimum.
Note that $|P\cap P'|=1$; otherwise, we can add another path ending at this intersection contradicting maximality.
Observe that this property requires $r\ge a_1/2$.

Having chosen $P\in\cF_u$ and $P'\in\cF_v$, let $\{i,j\}\in\binom{[k]}{2}$ be such that $P\subseteq P_{(i)}\cup P_{(j)}$.
Notice that (because $r\le (a_1+a_2+1)/2$) $\cF_v$ partitions into $\cF_v^i\cup\cF_v^j$, where $Q\in\cF_v^\ell$ if $Q\cap P\subset P_{(\ell)}$. 
Hence $|\cF_v|=|\cF_v^i|+|\cF_v^j|$.

Let $P''\in\cF_v^j$ such that $|P\cap P''|$ is minimum.
Then $|P\cap P''|=1$; indeed, suppose not.
Because $\cF$ is maximum there must be some path $P^+\in\cF_u$ such that $P^+\in P_{(i)}\cup P_{(j)}$ and $|P'\cap P^+|=2$ since, otherwise, such a path could be added to the family.
For $l\not=j$, define $P'_l$ to be the path in the cycle $P_{(j)}\cup P_{(l)}$ beginning at $w_{j,h_j}$.
Now the family $\cF - \{P^+\}\cup \{P'_1, \ldots, P'_k\} - \{P'_j\}$ is intersecting and larger than $\cF$ since $k\ge 3$, a contradiction.
Hence $|P\cap P''|=1$.

Now this implies that $|\cF_u|=1$ since any other path containing $u$ is disjoint from $P'$ or $P''$.
In the same way that we built $P^+$ by shifting $P$ toward $v$ along the strand $P_{(i)}$, we can shift $P'$ and $P''$ toward $u$ along the strands $P_{(i)}$ and $P_{(j)}$ respectively. 
Among these there are at least two, say $Q$ and $Q'$, that do not contain $u$.  
Then $\cF-\{P\}\cup\{Q,Q'\}$ is still intersecting and larger than $\cF$, a contradiction.
\end{enumerate}
\item 
Now suppose that some path $P\in\cF$ contains both $u$ and $v$.
Then there is a unique $i$ such that the strand $P_{(i)}\subset P$.
Of course, this implies that $r\ge a_i+1$, which we know is at least $a_1+1$.
Similar to the above, we partition $\cF = \cF'_u\cup \cF'_v\cup \cF'_{uv}$, where $\cF'_{uv}$ consists of all the paths of $\cF$ that contain both $u$ and $v$, $\cF'_{u}$ contains those paths containing $u$ but not $v$, and $\cF'_{v}$ contains those paths containing $v$ but not $u$.
We will enumerate $|\cF| = |\cF'_u| + |\cF'_v| + |\cF'_{uv}|$ in two cases as follows.

If $\cup\cF$ is a tree, then by Theorem \ref{t:trees} every intersecting family is a star. Hence we may assume that there are paths  $P'\in\cF'_u$ and $P''\in\cF'_v$ which intersect on some other strand $P_(j)$ with $j\neq i$; we choose such paths with $|P'\cap P''|$ minimized. As we have seen previously, by maximality it must be that $|P'\cap P''|=1$, and so we write $P'\cap P''=\{w_{j,x}\}$.
\begin{enumerate}
    \item 
    First, let $r = a_1+1$, so that $|\cF'_{uv}|=1$.
    Thus we may assume that $i=1$.
    (Note that $(a_1+a_j+1)/2\ge r= a_1+1$, and so $2a_1-a_j\le a_1-1$, which we will use below.)
    Now we have $|\cF'_u| = (k-1)(r-x-1) + 1$ and $|\cF'_v| = (k-1)(r-a_j+x-1) + 1$.
    
    Hence 
    \begin{align*}
        |\cF|\ 
        &=\ |\cF'_u| + |\cF'_v| + |\cF'_{uv}|\ 
        =\ (k-1)(2r-a_j-2) + 3\\
        &=\ (k-1)(2a_1-a_j) + 3\ 
        \le\ (k-1)(a_1-1) + k\\
        &<\ k + \binom{k}{2}(a_1-1)\ 
        =\ k + \binom{k}{2}(r-2)\ 
        =\ f_k(r).
    \end{align*}
    \item 
    Second, let $a_1+2\le r\le (a_1+a_2+1)/2$.
    Then $|\cF'_{uv}| = 2(k-1) + (r-a_i-2)(k-1)^2$ because $k-1$ paths end at each of $u$ and $v$, while all others have each endpoint on some other strand different from $P_{(i)}$.
    As in the first case, we have $|\cF'_u| = (k-1)(r-x-1) + 1$ and $|\cF'_v| = (k-1)(r-a_j+x-1) + 1$, and so
    \begin{align}
        |\cF|
        &= |\cF'_u| + |\cF'_v| + |\cF'_{uv}|\nonumber \\
        &= 2(k-1) + (r-a_i-2)(k-1)^2 + (k-1)(2r-a_j-2) + 2.
        \label{e:rhs}
    \end{align}
    Now $f_k(r) = (r-a_1-2)(k-1)^2 + (a_1+2)(k-1) + (r-2)\binom{k-1}{2}$ in this case, so the right side of \eqref{e:rhs} is less than $f_k(r)$ when
    \begin{equation}
    \label{e:newineq}
    0  < (a_i-a_1)(k-1) + \frac{1}{2}(k-2)(r-2)+(a_1+a_j-2r+2) - \frac{2}{k-1}.
    \end{equation}
    Now $a_1+2\le r\le (a_1+a_2+1)/2$ implies that $a_2\ge a_1+3$, and so $a_i-a_1\ge a_2-a_1\ge 3$.
    It also implies that $a_1+a_j-2r+2\ge a_j-a_2+1\ge 1$.
    Hence, the right hand side of Equation \eqref{e:newineq} is at least $6+(1/2)+1-1>0$, which finishes the proof of part (b), and hence of case (2).
\end{enumerate}
\end{enumerate}

Therefore, $\cF$ is a star and $|\cF|\le f_k(r)$, with equality if and only if it is centered on a hub.
\end{proof}


\section{Transversal and Triangular Results}
\label{s:Trans}

In this section, we collect some related results on transversal numbers which we believe to be of independent interest. We include some results on finite projective planes, since they play an important role in many extremal combinatorial contexts.
For example, in \cite{ErdLov}, Erd\H{o}s and Lov\'asz defined $m(r)$ to be the minimum size of an intersecting family of $r$-sets with transversal number $r$. 
They showed that $m(r)\ge \frac{8}{3}r-3$, but obtained as an upper bound $m(r)\le4r^{3/2}\log{r}$  -- their construction uses random collections of lines in projective planes, and so held when $r-1$ is the order of a projective plane. 
This was first improved by Kahn\cite{Kahn1} to $m(r)<Cr\log{r}$, again contingent on the  existence of an order $r-1$ projective plane. Finally, Kahn\cite{Kahn2} showed that there is some prime power $K$ so that $m(r)\le 5(K^2+k)t$ when $r=Kq+t$, thus confirming the Erd\H{o}s-Lov\'asz conjecture that $m(r)=O(r)$.  
It is worth noting that Kahn's method does not provide any idea how large $K$ might be.

In Subsection \ref{ss:Elementary}, we relax the condition that $\cF$ must be uniform, proving some basic transversal results involving degree conditions for our intersecting families.  
In Subsection \ref{ss:Triangular} we focus on those intersecting families that contain no large stars whatsoever, with $\Delta(\cF)=2$.  
Finally, in Subsection \ref{ss:ProjPlanes}, we construct some examples in projective planes which demonstrate the sharpness of several of our preceding results in Section \ref{s:Trans}.


\subsection{Elementary Observations}
\label{ss:Elementary}

First, we note some straightforward bounds that we will use later. 
These results are well known, but we include proofs here for completeness.

\begin{fact}[See \cite{Jukna}, p. 111, Sec. 10.2.]
\label{f:MinSetUpperBound}
If $\cF$ is an intersecting family of sets then $\t(\cF)\le\min_{A\in\cF}|A|$.
\end{fact}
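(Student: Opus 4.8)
The statement to prove is Fact~\ref{f:MinSetUpperBound}: if $\cF$ is an intersecting family of sets then $\t(\cF)\le\min_{A\in\cF}|A|$.

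\textbf{Approach.} The plan is to exhibit an explicit transversal of the claimed size, namely a minimum-cardinality member of the family itself. First I would pick a set $A\in\cF$ with $|A|=\min_{B\in\cF}|B|$; such a set exists as long as $\cF$ is nonempty (and if $\cF=\emptyset$ the statement is vacuous or one adopts the usual convention $\t(\emptyset)=0$, so I would dispatch that trivial case in one line if needed). Then I would observe that $A$ is itself a transversal of $\cF$: for every $B\in\cF$, the intersecting hypothesis gives $A\cap B\neq\emptyset$, which is exactly the defining condition for $A$ to meet every member of $\cF$. Since $\t(\cF)$ is the minimum size over all transversals, and $A$ is a transversal of size $\min_{B\in\cF}|B|$, we conclude $\t(\cF)\le |A| = \min_{B\in\cF}|B|$.

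\textbf{Key steps, in order.} (1)~Reduce to the case $\cF\neq\emptyset$. (2)~Choose $A\in\cF$ attaining the minimum cardinality. (3)~Invoke the definition of ``intersecting'' to see $A$ meets every member of $\cF$, so $A$ is a transversal. (4)~Apply the definition of $\t(\cF)$ as the minimum transversal size to conclude.

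\textbf{Main obstacle.} There is essentially no obstacle here — this is a definitional unwinding, and the only thing to be slightly careful about is the boundary case of an empty family (or of a family whose sets could be infinite, though in this paper all sets are finite). The entire content is the observation that a minimum member of an intersecting family serves as a transversal of itself.

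\begin{proof}
If $\cF=\emptyset$ the claim is vacuous, so assume $\cF\neq\emptyset$ and choose $A\in\cF$ with $|A|=\min_{B\in\cF}|B|$. Since $\cF$ is intersecting, $A\cap B\neq\emptyset$ for every $B\in\cF$, so $A$ is a transversal of $\cF$. Therefore $\t(\cF)\le|A|=\min_{B\in\cF}|B|$.
\end{proof}
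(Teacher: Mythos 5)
Your proof is correct and takes essentially the same approach as the paper's: choose a minimum-size member of $\cF$ and observe that the intersecting hypothesis makes it a transversal. The only difference is your explicit handling of the empty-family case, which the paper omits.
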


\begin{proof}
Let $F$ be a set in $\cF$ of minimum size.
Since $\cF$ is intersecting, if follows that $F$ is a transversal of $\cF$.
Hence $\t(\cF)\le|F|$.
\end{proof}

\begin{fact}
\label{f:basicUpperBound}
If $\cF$ is an intersecting family of sets then $\t(\cF)\le\ceil{|\cF|/2}$.
\end{fact}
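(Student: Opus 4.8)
The plan is to bound the transversal number by greedily selecting, from each member of $\cF$, an element that kills at least two sets of $\cF$ at once, so that each chosen element reduces the number of uncovered sets by at least two. First I would observe that if $|\cF|\le 1$ the statement is trivial (and if $|\cF|=2$ we may pick any element of the nonempty intersection, giving $\tau(\cF)\le 1 = \lceil 2/2\rceil$), so we may assume $|\cF|\ge 2$ and induct on $|\cF|$.

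The key step is this: pick any two distinct members $A,B\in\cF$. Since $\cF$ is intersecting, $A\cap B\ne\emptyset$, so choose $x\in A\cap B$. Let $\cF' = \{F\in\cF \mid x\notin F\}$ be the subfamily of sets not hit by $x$. Then $\cF'$ is still intersecting (it is a subfamily of $\cF$), and $|\cF'|\le |\cF|-2$ since both $A$ and $B$ are removed. By the induction hypothesis, $\tau(\cF')\le \lceil |\cF'|/2\rceil \le \lceil (|\cF|-2)/2\rceil = \lceil |\cF|/2\rceil - 1$. Adjoining $x$ to a minimum transversal of $\cF'$ yields a transversal of $\cF$ of size at most $\lceil |\cF|/2\rceil$. (When $\cF'=\emptyset$, its transversal number is $0$ and $\{x\}$ alone works.) This completes the induction.

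There is no real obstacle here — the only point requiring mild care is the base case and the arithmetic of the ceiling function: one should check both parities of $|\cF|$, noting that $\lceil (|\cF|-2)/2\rceil + 1 = \lceil |\cF|/2\rceil$ holds regardless of whether $|\cF|$ is even or odd, and that the degenerate cases $|\cF|\in\{0,1\}$ (where the bound reads $\tau\le 0$ or $\tau\le 1$) are handled directly, an empty family having transversal number $0$ and a singleton family $\{A\}$ having transversal number $1$ as long as $A\ne\emptyset$, which it is since $\cF$ is intersecting. Thus the proof is a short strong induction on $|\cF|$, with the pairing-off of members being the entire content.
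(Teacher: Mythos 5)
Your proof is correct and rests on the same key observation as the paper's: since the family is intersecting, one element chosen from $A\cap B$ covers two sets at once, so $\ceil{|\cF|/2}$ elements suffice. The paper phrases this as a single arbitrary pairing-off of the members of $\cF$ rather than as an induction, but the content is identical.
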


\begin{proof}
For any intersecting family of sets $\cF$, pair off its sets arbitrarily, with possibly one set remaining. 
For each pair of sets choose one element from their intersection.
Also choose one element from the remaining set if it exists.
These choices form a transversal of size $\ceil{|\cF|/2}$. 
Therefore $\t(\cF)\le\ceil{|\cF|/2}$.
\end{proof}

\begin{fact}[See \cite{Jukna}, p. 111, Sec. 10.2.]
\label{f:basicLowerBound}
If $\cF$ is an intersecting family of sets then $\t(\cF)\ge\ceil{|\cF|/\D(\cF)}$.
\end{fact}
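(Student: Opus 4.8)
The statement to prove is Fact~\ref{f:basicLowerBound}: if $\cF$ is an intersecting family of sets then $\t(\cF)\ge\ceil{|\cF|/\D(\cF)}$.

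The plan is to argue directly from the definitions via a counting (double-counting / pigeonhole) argument on incidences between a minimum transversal and the members of $\cF$. First I would let $X$ be a transversal of $\cF$ of minimum size, so $|X|=\t(\cF)$. Since $X$ is a transversal, every member $F\in\cF$ satisfies $X\cap F\neq\emptyset$, hence contains at least one element of $X$; thus the sets $\{F\in\cF : x\in F\}$ for $x\in X$ cover all of $\cF$. This gives $|\cF|\le\sum_{x\in X}|\cF_x| = \sum_{x\in X}\deg(x)$.

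Next I would bound each term: by definition $\deg(x)=|\cF_x|\le\D(\cF)$ for every $x$, so $|\cF|\le |X|\cdot\D(\cF) = \t(\cF)\cdot\D(\cF)$. Rearranging yields $\t(\cF)\ge |\cF|/\D(\cF)$, and since $\t(\cF)$ is an integer we may round up to obtain $\t(\cF)\ge\ceil{|\cF|/\D(\cF)}$.

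There is essentially no obstacle here — the only thing to be careful about is the direction of the covering inequality (the union of stars over $X$ contains every member of $\cF$, so summing sizes of those stars overcounts $|\cF|$, never undercounts it) and the fact that we do not even need $\cF$ to be intersecting for this particular bound, though it is stated in that context. I would note in passing that the hypothesis that $\cF$ is intersecting is not actually used, but keep the statement as given. The write-up should be just a few lines, matching the brevity of the neighbouring Facts.

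\begin{proof}
Let $X$ be a transversal of $\cF$ of minimum size, so that $|X|=\t(\cF)$.
Since $X$ meets every member of $\cF$, each $F\in\cF$ contains some $x\in X$, and hence $F\in\cF_x$ for that $x$.
Therefore $\cF=\bigcup_{x\in X}\cF_x$, which gives
\[
|\cF|\le\sum_{x\in X}|\cF_x|=\sum_{x\in X}\deg(x)\le |X|\cdot\D(\cF)=\t(\cF)\cdot\D(\cF).
\]
Dividing by $\D(\cF)$ yields $\t(\cF)\ge |\cF|/\D(\cF)$, and since $\t(\cF)$ is an integer, $\t(\cF)\ge\ceil{|\cF|/\D(\cF)}$.
\end{proof}
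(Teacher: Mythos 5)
Your proof is correct and uses essentially the same argument as the paper's: each element of a transversal lies in at most $\D(\cF)$ members of $\cF$, so a transversal needs at least $\ceil{|\cF|/\D(\cF)}$ elements. You simply spell out the covering/double-counting step that the paper leaves implicit, and your observation that the intersecting hypothesis is unused is also accurate.
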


This follows from a more general statement in \cite{Jukna}, stating that $\t(\cF)$ is bounded below by the size of the largest matching in $\cF$. 
We give a direct proof here.

\begin{proof}
Let $X$ be a transversal of $\cF$.
For any $x\in X$ we get $\deg(x)\le\D(\cF)$.
Hence $X$ must contain at least $\ceil{|\cF|/\D(\cF)}$ elements to be a transversal of $\cF$.
\end{proof}


\subsection{Triangular Families}
\label{ss:Triangular}

In the interest of exploring \ekr\ problems, we will be interested in trios of sets with no common intersection.  In particular, we say that a trio of sets $\{A,B,C\}$ is {\it triangular} if $A\cap B\cap C=\mt$.
We further call a family $\cF$ of sets {\it triangular} if $\{A,B,C\}$ is triangular for all $\{A,B,C\}\subseteq\cF$.
For example, any family of lines in general position in the plane is both intersecting and triangular.
For subsets of a finite set, one may ask how large an intersecting triangular family can be or what is its transversal number.
We begin with a characterization.

\begin{fact}
\label{f:TriangularDelta}
An intersecting family $\cF$ containing at least two sets is triangular if and only if $\D(\cF)=2$.
\end{fact}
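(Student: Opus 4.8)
The plan is to prove both directions of the equivalence $\cF$ triangular $\iff$ $\D(\cF)=2$, for an intersecting family $\cF$ with $|\cF|\ge 2$.

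First I would prove the forward direction, that triangular implies $\D(\cF)\le 2$. Suppose for contradiction that some element $x$ has $\deg(x)\ge 3$, so there are three distinct sets $A,B,C\in\cF$ all containing $x$. Then $x\in A\cap B\cap C$, so $A\cap B\cap C\neq\mt$, contradicting the assumption that $\{A,B,C\}$ is triangular. This gives $\D(\cF)\le 2$. To upgrade $\le 2$ to $=2$, I note that since $\cF$ is intersecting and contains at least two sets $A\neq B$, there is some $x\in A\cap B$, so $\deg(x)\ge 2$; hence $\D(\cF)=2$.

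For the converse, suppose $\D(\cF)=2$ and let $\{A,B,C\}\subseteq\cF$ be any trio. If $A\cap B\cap C$ contained an element $x$, then $x$ would lie in all three of $A$, $B$, $C$, forcing $\deg(x)\ge 3 > 2 = \D(\cF)$, a contradiction. (One should be a little careful that $A$, $B$, $C$ are genuinely distinct — but in the definition of triangular we quantify over $\{A,B,C\}\subseteq\cF$ as a subset, so they are distinct; if one wanted to be pedantic, when two of them coincide the condition $A\cap B\cap C\neq\mt$ would contradict $\cF$ being intersecting with the third set, but the subset convention already handles this.) Hence $A\cap B\cap C=\mt$, so $\{A,B,C\}$ is triangular, and therefore $\cF$ is triangular.

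This argument is entirely routine; I do not anticipate any real obstacle. The only point requiring the hypothesis $|\cF|\ge 2$ (together with intersecting) is to rule out the degenerate case $\D(\cF)\le 1$ and pin the maximum degree at exactly $2$ in the forward direction — without it, a single-set family is vacuously triangular but has $\D(\cF)=|$the unique set$|$, which may exceed $2$, and the empty family has $\D(\cF)=0$.
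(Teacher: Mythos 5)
Your proof is correct and follows essentially the same route as the paper's: both directions are the observation that an element of degree at least $3$ is precisely a witness against triangularity, plus the remark that intersecting with $|\cF|\ge 2$ forces some element of degree at least $2$. Your parenthetical about distinctness of $A,B,C$ is a harmless extra precaution; nothing further is needed.
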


\begin{proof}
Assume that $\cF$ is triangular.
That is, for all $\{A,B,C\}\subseteq \cF$ we get that $A\cap B\cap C=\emptyset$, which implies $\D(\cF)<3$.
Furthermore, since $\cF$ contains at least two sets and is intersecting $\D(\cF)>1$.
Hence $\D(\cF)=2$.

Suppose that $\D(\cF)=2$.
Then for any $x\in\cup\cF$, we get that $x$ is an element of at most 2 sets in $\cF$.
Therefore any three sets from $\cF$ are triangular.
Hence $\cF$ is triangular.
\end{proof}

\begin{fact}
\label{f:MaxSetSizeBound}
If $\cF$ is a triangular intersecting family of sets then $|\cF|\le 1+ \min_{A\in\cF}|A|$.
\end{fact}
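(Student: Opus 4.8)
The plan is to fix a set $A\in\cF$ of minimum size and count, for each element $x\in A$, how many sets of $\cF$ contain $x$; the triangular hypothesis, via Fact \ref{f:TriangularDelta}, tells us this count is at most $2$. The key observation to exploit is that $A$ itself is a transversal of $\cF$ (since $\cF$ is intersecting), so every set $B\in\cF$ with $B\neq A$ meets $A$ in at least one element, and in fact by triangularity $B$ can meet $A$ in essentially only one ``fresh'' element. More precisely, I would argue that each element $x\in A$ is contained in at most one set of $\cF$ other than $A$ itself: if $x$ lay in two such sets $B,C$, then $\{A,B,C\}$ would be a trio with $x\in A\cap B\cap C$, contradicting that $\cF$ is triangular (using that $\cF$ has at least these sets; the degenerate cases $|\cF|\le 2$ are immediate since then $|\cF|\le 2\le 1+\min_{A\in\cF}|A|$).

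Granting that, I would finish by a double-counting / injection argument. Define a map $\Phi$ from $\cF\setminus\{A\}$ into $A$ by sending each $B$ to some element of $A\cap B$ (nonempty since $\cF$ is intersecting, so $A$ is a transversal). By the previous paragraph, $\Phi$ is injective: if $\Phi(B)=\Phi(C)=x$ for $B\neq C$, both distinct from $A$, then $x$ lies in two sets of $\cF$ besides $A$, contradiction. Hence $|\cF\setminus\{A\}|\le|A|=\min_{B\in\cF}|B|$, giving $|\cF|\le 1+\min_{B\in\cF}|B|$ as claimed.

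The main obstacle — really the only subtlety — is handling small cases and making sure the triangularity is being invoked on a genuine trio of three \emph{distinct} sets. When $|\cF|\ge 3$ this is automatic, and when $|\cF|\le 2$ the bound holds trivially because $\min_{A\in\cF}|A|\ge 1$. One should also note that the argument does not even need $\D(\cF)=2$ in full strength; it only needs that no element lies in three sets of $\cF$, which is exactly the $\D(\cF)\le 2$ half of Fact \ref{f:TriangularDelta} and is precisely what triangularity of a trio forbids. So the write-up is short: pick a minimum set, observe it is a transversal, use triangularity to bound the multiplicity of each of its elements by $2$ (equivalently, by $1$ among the other sets), and conclude via the injection.
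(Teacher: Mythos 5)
Your proposal is correct and follows essentially the same route as the paper: fix a minimum-size set $A$, note that intersecting implies $A$ is a transversal, use triangularity (via Fact \ref{f:TriangularDelta}) to see each $x\in A$ lies in at most one other member of $\cF$, and conclude $|\cF|\le 1+|A|$. The explicit injection and the handling of the $|\cF|\le 2$ case are just a more careful write-up of the same argument.
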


\begin{proof}
Let $A$ be a set in $\cF$ of minimum size.
Since $\cF$ is triangular, it follows from Fact \ref{f:TriangularDelta} that $\D(\cF)=2$.
For all $x\in A$, $x$ is contained in at most one other set in $\cF$.
Hence $|\cF|\le 1+|A|$.
\end{proof}

The following corollary shows that Fact \ref{f:basicUpperBound} is best-possible.

\begin{cor}
\label{c:NoTriples}
If $\cF$ is a triangular intersecting family of sets then $\t(\cF)=\ceil{|\cF|/2}$.
\end{cor}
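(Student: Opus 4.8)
The plan is to sandwich $\t(\cF)$ between a known upper bound and a matching lower bound, both already available in the preceding facts. Fact~\ref{f:basicUpperBound} gives $\t(\cF)\le\ceil{|\cF|/2}$ for any intersecting family, with no triangularity needed, so the only thing left to establish is the reverse inequality $\t(\cF)\ge\ceil{|\cF|/2}$.

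For the lower bound I would first handle the trivial cases $|\cF|\le 1$ by hand: if $\cF=\mt$ then $\t(\cF)=0=\ceil{0/2}$, and if $\cF=\{A\}$ then $A\ne\mt$ (since $\cF$ is intersecting) so $\t(\cF)=1=\ceil{1/2}$. Assuming then that $\cF$ contains at least two sets, Fact~\ref{f:TriangularDelta} applies and tells us precisely that $\D(\cF)=2$. Plugging this into Fact~\ref{f:basicLowerBound}, which states $\t(\cF)\ge\ceil{|\cF|/\D(\cF)}$, we get $\t(\cF)\ge\ceil{|\cF|/2}$. Combining with the upper bound from Fact~\ref{f:basicUpperBound} forces $\t(\cF)=\ceil{|\cF|/2}$.

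I do not expect any real obstacle: the corollary is essentially a two-line deduction that the lower bound of Fact~\ref{f:basicLowerBound} and the upper bound of Fact~\ref{f:basicUpperBound} coincide exactly when $\D(\cF)=2$, which is the content of triangularity by Fact~\ref{f:TriangularDelta}. The only care needed is to mention (or quietly exclude) the degenerate small families so that the invocation of Fact~\ref{f:TriangularDelta} is legitimate, and to note that this equality is what shows Fact~\ref{f:basicUpperBound} is best possible, as advertised in the surrounding text.
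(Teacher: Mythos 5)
Your proof is correct and follows essentially the same route as the paper: the upper bound from Fact~\ref{f:basicUpperBound}, and the lower bound by combining Fact~\ref{f:basicLowerBound} with $\D(\cF)=2$ from Fact~\ref{f:TriangularDelta}. Your extra care with the cases $|\cF|\le 1$ (where Fact~\ref{f:TriangularDelta} does not apply) is a minor refinement the paper omits.
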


\begin{proof}
Since $\cF$ is intersecting, it follows from Fact \ref{f:basicUpperBound} that $\t(\cF)\le\ceil{|\cF|/2}$.
As for the lower bound, from Fact \ref{f:basicLowerBound} we get that $\t(\cF)\ge\ceil{|\cF|/\D(\cF)}$.
Given that $\cF$ is triangular, Fact \ref{f:TriangularDelta} gives us $\D(\cF)=2$.
Thus $\t(\cF)=\ceil{|\cF|/2}$.
\end{proof}

It is worth asking whether or not triangular families characterize equality in Fact \ref{f:basicUpperBound}.
Curiously, this is true for odd $|\cF|$ but not necessarily for all even $|\cF|$.

\begin{theorem}
\label{t:OddFam}
If $\cF$ is an intersecting family of $2k-1$ sets with $\t(\cF)=k$ then $\cF$ is triangular.
\end{theorem}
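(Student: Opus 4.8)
The plan is to prove the contrapositive: if $\cF$ is an intersecting family with $|\cF| = 2k-1$ that is \emph{not} triangular, then $\t(\cF) < k$, i.e. $\t(\cF) \le k-1$. So suppose $\cF$ is not triangular; then by Fact~\ref{f:TriangularDelta} we have $\D(\cF) \ge 3$, so there is an element $x$ lying in at least three sets of $\cF$. The idea is to greedily build a small transversal: put $x$ into our transversal set, discard the (at least $3$) sets of $\cF$ containing $x$, and then handle the remaining at most $2k-4$ sets by the naive pairing argument of Fact~\ref{f:basicUpperBound}, picking one element from each of $\le k-2$ pairs. This yields a transversal of size at most $1 + (k-2) = k-1 < k$, as desired.

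The one subtlety is parity/rounding: after removing the $\ge 3$ sets through $x$, the number of remaining sets is $2k-1 - \deg(x) \le 2k - 4$, which is even or odd depending on $\deg(x)$; in either case $\lceil (2k-4)/2 \rceil = k-2$ suffices, and if $\deg(x) > 3$ we only do better. So the count $1 + \lceil (|\cF| - \deg(x))/2 \rceil \le 1 + \lceil (2k-4)/2 \rceil = k-1$ goes through cleanly. Combined with $\t(\cF) \le \lceil |\cF|/2 \rceil = k$ from Fact~\ref{f:basicUpperBound}, this shows that equality $\t(\cF)=k$ forces $\D(\cF) = 2$, which by Fact~\ref{f:TriangularDelta} means $\cF$ is triangular.

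I don't anticipate a serious obstacle here — the argument is short and the only thing to be careful about is the ceiling arithmetic and making sure the element $x$ genuinely covers at least three of the $2k-1$ sets so that the residual family is small enough. (One should also note the degenerate cases are harmless: if $2k-1 = 1$ then $\t(\cF)=1=k$ and the single-set family is vacuously triangular; the interesting range is $k \ge 2$.) The contrast with the even case, flagged in the remark before the theorem, presumably comes precisely from the fact that when $|\cF|$ is even the pairing argument has no leftover set to exploit, so a non-triangular even family can still have $\t(\cF) = |\cF|/2$ — but that is not needed for this statement.
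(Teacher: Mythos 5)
Your proof is correct and follows essentially the same route as the paper's: both arguments take an element of degree at least $3$, remove its star to leave at most $2k-4$ sets, and apply the pairing bound of Fact~\ref{f:basicUpperBound} to the residual intersecting family to produce a transversal of size at most $k-1$. The only difference is cosmetic (contrapositive versus contradiction), and your handling of the ceiling arithmetic and the degenerate small cases matches the paper's.
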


\begin{proof}
Suppose that $\cF$ is intersecting, $|\cF|=m=2k-1$, and $\t(\cF)=k$.
If $m\le 2$ then $\cF$ is triangular by definition, so we assume that $m\ge 3$, and let $X=\{x_1,\ldots,x_k\}$ be a minimum transversal.

If some $a\in\cup\cF$ has $\deg(a)>2$ then, for $\cF'=\cF-\cF_a$, we have $|\cF'|\le m-3=2k-4$, and so $\t(\cF')\le k-2$ by Fact \ref{f:basicUpperBound}.
Now let $Y$ be a minimum transversal of $\cF'$; then $Y\cup\{a\}$ is a transversal of $\cF$ of size at most $k-1$, a contradiction.
Hence $\D(\cF)=2$.
\end{proof}

For $m\in\{4,6\}$, there are examples of intersecting families $\cF$ with $|\cF|=m$, $\t(\cF)=m/2$, and $\D(\cF)>2$.
Indeed, for a given integer $h>1$, subset $S\subset\{0,\ldots,h-1\}=[h]$, and positive integer $i$, define the set $S^i=\{s+i\mod h\mid s\in S\}$.
Also define the family $\cR_h(S)=\{S^i\mid i\in [h]\}$.
Now set $S_2=\{0,1,2\}$ and $S_3=\{0,1,3\}$.
It is easy to verify that, for $k\in\{2,3\}$, the family $\cR_{2k}(S_k)$ is intersecting and has size $2k$.
Additionally, $\t(\cR_{2k}(S_k))=k$ because, for any $T\in\binom{[2k]}{k-1}$, there is some $S^i\in\cF_{2k}$ with $S^i\cap T=\mt$.
Moreover, $\cR_{2k}(S_k)$ is not triangular because it is 3-regular.

This simple rotational construction illustrates the conflicting objectives in the Erd\H{o}s-Lov\'asz problem discussed at beginning of this section.
When $|S|$ is large compared to $2k$, $\cR_{2k}(S)$ tends to be intersecting with small transversal number, while if $|S|$ is small, $\cR_{2k}(S)$ tends to have large transversal number but not be intersecting.


\subsection{Projective Planes}
\label{ss:ProjPlanes}

The finite projective plane $\zpq$, where $q$ is any prime power, is a rich example in extremal combinatorics. 
Frankl and Graham \cite{FranGrah} gave a sufficient condition for a uniform family to be \ekr\ that considers its generalized degrees.
They used a construction based on projective planes to show that the condition couldn't be weakened.

Recall the notation $\D_s(\cF) = \max_{|X|=s}|\cF_X|$.

\begin{theorem}[\cite{FranGrah}]
\label{t:FranGrah}
If $\cF\subseteq \binom{[n]}{r}$ is intersecting and  $\D_1(\cF)\ge (r^2-r+1)\D_2(\cF)$ then $\cF$ is \ekr.
When $r-1$ is a prime power, there exists a non-\ekr\  $\cH\subseteq \binom{[n]}{r}$ with $\D_1(\cH)= r^2-r$ and $\D_2(\cH)=1$.
\hfill $\Box$
\end{theorem}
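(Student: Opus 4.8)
The plan is to treat the two halves of the statement separately.

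\emph{Sufficiency.} I will show that if $x$ is a vertex of maximum degree in $\cF$, then the star $\cF_x$ --- an intersecting subfamily of size $\D_1(\cF)$ --- satisfies $|\cH|\le|\cF_x|$ for every intersecting $\cH\subseteq\cF$, which is exactly the assertion that $\cF$ is \ekr. If $\cH$ is itself a star, at $y$ say, then $|\cH|\le|\cF_y|=\deg(y)\le\D_1(\cF)=|\cF_x|$, so all of the work lies in bounding $|\cH|$ when $\cH$ is not a star, where the target is
\[
|\cH|\ \le\ (r^2-r+1)\,\D_2(\cH);
\]
indeed, since $\D_2(\cH)\le\D_2(\cF)$, this together with the hypothesis gives $|\cH|\le(r^2-r+1)\D_2(\cF)\le\D_1(\cF)=|\cF_x|$.

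To prove the displayed estimate I would anchor a double count on a fixed $A\in\cH$. Since $\cH$ is intersecting but is a star at no vertex, for each $a\in A$ there is a set $D_a\in\cH$ with $a\notin D_a$, and $D_a\cap A\ne\emptyset$. Every $H\in\cH$ meets $A$ at some $a$ and then meets $D_a$ at some $e\ne a$, so $\{a,e\}\subseteq H$; reading this with $a$ fixed yields $\deg_\cH(a)\le\sum_{e\in D_a}|\cH_{\{a,e\}}|\le r\,\D_2(\cH)$. On the other hand $\sum_{a\in A}\deg_\cH(a)=\sum_{H\in\cH}|H\cap A|\ge r+(|\cH|-1)$, the $r$ coming from $H=A$. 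Combining these gives $|\cH|\le r^2\D_2(\cH)-r+1$, which is already the target when $\D_2(\cH)=1$. The main obstacle is closing the remaining gap of $(r-1)(\D_2(\cH)-1)$ when $\D_2(\cH)\ge 2$: I would pick each $D_a$ so that $|A\cap D_a|$ is as large as possible and charge the forced over-counting of $A$ --- and of the other sets meeting $A$ in several vertices --- against the crude bound $\deg_\cH(a)\le\sum_{e\in D_a}|\cH_{\{a,e\}}|$. This refinement is the technical heart of the argument.

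\emph{Sharpness.} Let $q=r-1$ be a prime power and begin with the family $\cH'$ of lines of the projective plane $\zprm$, regarded as $r$-subsets of its $r^2-r+1$ points. Then $\cH'$ is intersecting, $|\cH'|=r^2-r+1$, every point lies on exactly $r$ lines, and every two points lie on exactly one line, so $\D_1(\cH')=r$ and $\D_2(\cH')=1$. Fix a point $x$ and enlarge $\cH'$ to $\cH$ by adjoining $r^2-2r$ further $r$-sets of the form $\{x\}\cup T$, where the $(r-1)$-sets $T$ use only brand-new points and are pairwise disjoint. Then a pair of new points lies in at most one new set, a pair $\{x,y\}$ with $y$ old lies only in the old line through $x$ and $y$, and a pair with one old and one new point lies in no set, so $\D_2(\cH)=1$; and $\deg_\cH(x)=r+(r^2-2r)=r^2-r$ while every other vertex has degree $r$ or $1$, so $\D_1(\cH)=r^2-r$. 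Finally $\cH$ is not \ekr: its largest star is $\cH_x$, of size $r^2-r$, yet the intersecting subfamily $\cH'\subseteq\cH$ has the strictly larger size $r^2-r+1$, so no star of $\cH$ dominates every intersecting subfamily. Hence the constant $r^2-r+1$ cannot be lowered to $r^2-r$.
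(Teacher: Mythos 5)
The paper does not actually prove this theorem: it is quoted from Frankl and Graham \cite{FranGrah} and closed with a $\Box$, the only additional information being that their extremal construction lives on $n=m(r-1)(r^2-r+1)$ points and is built from mutually orthogonal copies of $\zprm$. So your proposal must stand on its own. The sharpness half does: your $\cH$ (the lines of $\zprm$ plus $r^2-2r$ pendant $r$-sets through a fixed point $x$ on pairwise disjoint blocks of fresh points) indeed has $\D_1(\cH)=r^2-r$, $\D_2(\cH)=1$, and contains the intersecting subfamily $\cH'$ of size $r^2-r+1$ exceeding every star, so it is not \ekr. This is a correct and noticeably simpler witness than the one the paper attributes to \cite{FranGrah}, and it suffices for the existence claim as stated.

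The sufficiency half has a genuine gap, which you flag yourself. Your reduction is sound --- it is enough to show that every intersecting non-star $\cH\subseteq\cF$ satisfies $|\cH|\le(r^2-r+1)\D_2(\cH)$ --- but the single-anchor double count only gives $|\cH|\le r^2\D_2(\cH)-(r-1)$, which is \emph{weaker} than the target precisely when $\D_2(\cH)\ge2$, and the proposed repair does not close it: the slack in $\deg_\cH(a)\le\sum_{e\in D_a}|\cH_{\{a,e\}}|$ equals $\sum_{H\ni a}(|H\cap D_a|-1)$, which can be zero, so there is nothing to ``charge'' the overcount against. The standard way to reach the constant $r^2-r+1$ is to anchor on \emph{two} sets: pick distinct $A,B\in\cH$ and set $S=A\cap B$ with $1\le s=|S|\le r-1$; every $C\in\cH$ either contains some $x\in S$, in which case (choosing $D_x\in\cH$ with $x\notin D_x$) it lies in one of at most $r$ pair-stars $\cH_{\{x,e\}}$ with $e\in D_x$, or it misses $S$ and therefore contains a pair in $(A-S)\times(B-S)$; hence $|\cH|\le\bigl[sr+(r-s)^2\bigr]\D_2(\cH)=\bigl[r^2-s(r-s)\bigr]\D_2(\cH)\le(r^2-r+1)\D_2(\cH)$. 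Finally, note that for the statement exactly as printed, with $\cF$ itself assumed intersecting, there is a one-line proof: if $\cF$ is not a star and $x$ has maximum degree, some $D\in\cF$ avoids $x$, so $\cF_x\subseteq\bigcup_{e\in D}\cF_{\{x,e\}}$ and $\D_1(\cF)\le r\D_2(\cF)<(r^2-r+1)\D_2(\cF)$, a contradiction. The heavier machinery you are assembling is only needed for the (evidently intended) version in which $\cF$ is not assumed intersecting --- which is also the only version for which the non-intersecting witness $\cH$ demonstrates sharpness.
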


The construction of $\cH$ in Theorem \ref{t:FranGrah} has $n=m(r-1)(r^2-r+1)$ for some large enough $m$, and is built from a collection of mutually orthogonal copies of $\zprm$.

Here we use projective planes to investigate the tightness of Fact \ref{f:MaxSetSizeBound}.
Observe that Fact \ref{f:MaxSetSizeBound} implies that every triangular subfamily of $\zpq$ has size at most $q+2$.
In fact, this is tight for even $q$ but can be improved to $q+1$ for odd $q$, as we show below.

We first introduce some notation for $\zpq$, as follows.
Let $\zF_q$ denote the finite field of order $q$, and set $V=V_q=((\zF_q\cup\{\w\})\times\zF_q)\cup\{(\w,\w)\}$.
The set of $q^2+q+1$ elements of $V$ serve as both the points of $\zpq$ and the indices of the lines of $\zpq$; that is, $\zpq=\{L_\a\ \mid\ \a\in V\}$, where
\begin{itemize}
    \item 
    $L_{(m,b)}= \{(x,y)\in\zF_q\times\zF_q\ \mid y=mx+b\}\cup\{(\w,m)\}$, for $(m,b)\in\zF_q\times\zF_q$,
    \item
    $L_{(\w,b)}= \{(b,y)\in\zF_q\times\zF_q\}\cup\{(\w,\w)\}$, for $b\in\zF_q$, and
    \item
    $L_{(\w,\w)}= \{(\w,y)\ \mid y\in\zF_q\}\cup\{(\w,\w)\}$,
\end{itemize}
and all arithmetic, unless specified otherwise, is performed in $\zF_q$ throughout the remainder of this section.
Of course, $\deg(\a)=|L_\a|=q+1$ for all $\a\in V$.
It is well known that $\zpq$ is exactly 1-intersecting.

\begin{lemma}
\label{l:ProjPlaneCover}
For any prime power $q$, $\tau(\zpq)=q+1$.
\end{lemma}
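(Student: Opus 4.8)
The proof naturally splits into an upper bound and a lower bound. For the upper bound $\tau(\mathbb{P}(q)) \le q+1$, the plan is to exhibit an explicit transversal of size $q+1$, namely a single line together with one extra point, or more cleverly, the complement trick: since $\mathbb{P}(q)$ is intersecting and $1$-intersecting, by Fact~\ref{f:MinSetUpperBound} we immediately get $\tau(\mathbb{P}(q)) \le \min_{L}|L| = q+1$ because every line has exactly $q+1$ points. So the upper bound is essentially free. (In fact, a line $L_\alpha$ itself is a transversal, since any two lines meet.)

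The substance is the lower bound $\tau(\mathbb{P}(q)) \ge q+1$, i.e.\ that no set of $q$ points meets every line. This is the classical fact that a \emph{blocking set} in $\mathbb{P}(q)$ has size at least $q+1$ (with equality only for lines). The plan is to argue directly: suppose $B \subseteq V$ with $|B| = q$ is a transversal. Pick any point $p \in B$. There are $q+1$ lines through $p$, and they partition the remaining $q^2 + q$ points of the plane. The other $q-1$ points of $B$ must block all $q+1$ lines through $p$ \emph{other than} the ones already… — wait, each line through $p$ is already blocked by $p$ itself. So instead take a point $p \notin B$ (which exists since $|B| = q < q^2+q+1$). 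The $q+1$ lines through $p$ each must contain a point of $B$, and since distinct lines through $p$ meet only at $p \notin B$, these $q+1$ lines use $q+1$ \emph{distinct} points of $B$, forcing $|B| \ge q+1$, a contradiction. This is the clean counting argument, and I expect it to be the main (though not difficult) step: the key observation is simply that lines through a common external point are ``almost disjoint.''

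Combining the two bounds gives $\tau(\mathbb{P}(q)) = q+1$ for every prime power $q$, with no parity restriction needed. The only mild subtlety to state carefully is that a point $p \notin B$ exists, which holds because $|B| = q < q^2 + q + 1 = |V|$; and that the $q+1$ lines through $p$ are genuinely distinct and pairwise intersect only in $p$, which is a standard axiom of the projective plane. I would present the lower bound as the heart of the proof and dispatch the upper bound in one line via Fact~\ref{f:MinSetUpperBound}.
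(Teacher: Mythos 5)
Your proof is correct, and the upper bound is handled exactly as in the paper (a single line is a transversal, via Fact~\ref{f:MinSetUpperBound}). For the lower bound, however, you take a genuinely different route. The paper simply invokes Fact~\ref{f:basicLowerBound}: since $|\zpq|=q^2+q+1$ and $\D(\zpq)=q+1$ (each point lies on exactly $q+1$ lines), one gets $\t(\zpq)\ge\ceil[\big]{(q^2+q+1)/(q+1)}=q+1$ in one line, a global degree-counting argument. You instead give the classical blocking-set argument: a putative transversal $B$ of size $q$ misses some point $p$, and the $q+1$ lines through $p$ pairwise meet only at $p\notin B$, so they require $q+1$ distinct points of $B$. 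Both arguments are complete and short; the paper's buys brevity by reusing a fact already proved, while yours is self-contained, more local, and is the natural first step toward the stronger classical statement that blocking sets of size exactly $q+1$ must be lines (which the lemma does not need). Your mid-paragraph course correction (first picking $p\in B$, then realizing $p\notin B$ is the right choice) should of course be cleaned up in a final write-up, but the final argument you settle on is sound.
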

\begin{proof}
The upper bound follows from Fact \ref{f:MinSetUpperBound}.
The lower bound follows from Fact \ref{f:basicLowerBound}.
\end{proof}

\begin{theorem}
\label{t:ProjPlane}
Let $q$ be an odd prime power.
If $\cF$ is a triangular subfamily of $\zpq$ of maximum size then $|\cF|=q+1$.
\end{theorem}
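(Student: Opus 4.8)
The plan is to establish a matching lower and upper bound: exhibit an explicit triangular family of $q+1$ lines of $\zpq$, and show no triangular family can have $q+2$ lines. Since Fact~\ref{f:MaxSetSizeBound} together with $\deg(\a)=|L_\a|=q+1$ already gives $|\cF|\le q+2$ for any triangular $\cF\subseteq\zpq$, these two facts pin the maximum size to $q+1$.

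The first step is to translate the hypothesis into geometry. Because any two lines of $\zpq$ meet in exactly one point, a trio of distinct lines $\{A,B,C\}\subseteq\zpq$ satisfies $A\cap B\cap C=\mt$ precisely when $A,B,C$ are \emph{not} concurrent; hence a triangular subfamily of $\zpq$ is exactly a set of lines no three of which pass through a common point. For the lower bound I would take the $q+1$ lines $\{L_{(m,m^2)} : m\in\zF_q\}\cup\{L_{(\w,\w)}\}$. For $a\ne b$ the lines $L_{(a,a^2)}$ and $L_{(b,b^2)}$ meet at the point $(-(a+b),-ab)$, and $L_{(c,c^2)}$ passes through that point only when $(c-a)(c-b)=0$, i.e.\ $c\in\{a,b\}$; moreover $L_{(\w,\w)}$ meets $L_{(m,m^2)}$ at $(\w,m)$, so two distinct affine lines $L_{(a,a^2)},L_{(b,b^2)}$ and $L_{(\w,\w)}$ are never concurrent. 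Thus no three of these $q+1$ distinct lines are concurrent, giving a triangular family of size $q+1$.

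For the upper bound, suppose for contradiction that $\cF\subseteq\zpq$ is triangular with $|\cF|=q+2$. Fix $L\in\cF$; each of the $q+1$ lines in $\cF-\{L\}$ meets $L$ in one point, and these $q+1$ points are pairwise distinct (two coinciding would make three lines of $\cF$ concurrent), so, as $|L|=q+1$, this correspondence is a bijection onto the points of $L$. Hence every point of $L$ lies on $L$ and exactly one further line of $\cF$, and on no third by triangularity. Letting $d(P)=|\{K\in\cF : P\in K\}|$, this shows $d(P)=2$ for every $P$ lying on some line of $\cF$, and $d(P)=0$ otherwise; in particular $d(P)$ is always even. Now choose any line $\ell\notin\cF$ (possible since $q+2<q^2+q+1$). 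Each $K\in\cF$ meets $\ell$ in a single point, partitioning $\cF=\bigcup_{P\in\ell}\{K\in\cF : P\in K\}$, so $|\cF|=\sum_{P\in\ell}d(P)$ is even — contradicting $|\cF|=q+2$ being odd since $q$ is odd. Therefore $|\cF|\le q+1$, which with the construction gives the maximum size exactly $q+1$.

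The one genuinely substantive step is the "degree-$2$ rigidity": showing that a triangular family of the largest conceivable size $q+2$ must cover each of its points exactly twice; once that is in place, the parity argument is immediate (it is a dual form of Bose's classical nonexistence of a $(q+2)$-arc in a projective plane of odd order), and the construction is a routine verification. The remaining care needed is only in minor edge cases — that the $q+1$ lines exhibited are genuinely distinct and that a line $\ell\notin\cF$ exists — both of which are straightforward.
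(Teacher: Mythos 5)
Your proof is correct, but both halves take a genuinely different route from the paper's. For the lower bound the paper uses the family $\{L_{(\w,\w)},L_{(\w,0)},L_{(0,0)}\}\cup\{L_{(m_b,b)}\mid b\in\zF_q-\{0,1\}\}$ with $m_b=-b/(1-b)$ and checks that a point of degree $3$ would force three roots of a quadratic in $b$; your dual conic $\{L_{(m,m^2)}\mid m\in\zF_q\}\cup\{L_{(\w,\w)}\}$ does the same job with the cleaner concurrency test $(c-a)(c-b)=0$, and the two constructions are of equal weight. The real divergence is the upper bound. The paper normalizes so that $\cF\supseteq\{L_{(\w,\w)},L_{(\w,0)},L_{(0,0)}\}$ (an appeal to the symmetry of $\zpq$ on triangles of lines), observes that the remaining $q-1$ lines induce permutations $b\mapsto m_b$ and $b\mapsto a_b$ of $\zF_q^*$ with $b=-m_ba_b$, and gets a contradiction by summing discrete logarithms in $\zZ_{q-1}$. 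You instead establish the degree-$2$ rigidity of a hypothetical $(q+2)$-line triangular family by an injectivity/pigeonhole count on each line of $\cF$, and then count incidences of $\cF$ along a line $\ell\notin\cF$ to reach the parity contradiction that $q+2$ is even --- the classical Bose hyperoval argument, as you note. Your version buys something concrete: it is purely incidence-theoretic, so it needs no coordinates, no symmetry claim, and in fact proves the upper bound in any projective plane of odd order, not just the Desarguesian $\zpq$; the paper's algebraic argument is tied to the field coordinatization but meshes naturally with its explicit coordinate construction of the extremal family. The small verifications you flag (distinctness of the $q+1$ lines, existence of $\ell\notin\cF$, and that $d(P)\in\{0,2\}$ for every point) all go through.
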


\begin{proof}
We first construct an example to show that triangular families of size $q+1$ exist. 
For each $b\in\zF_q-\{0,1\}$, define $m_b=\frac{-b}{1-b}$.
Then, we define our family of $q+1$ lines to be $$\cF~=~\{L_{(\w,\w)},L_{(\w,0)},L_{(0,0)},L_{(m_2,2)}\ldots,L_{(m_{q-1},q-1)}\}.$$

Note that every line in $\cF$ has a distinct slope; hence we just need to show that every point $(x,y)$ has $\deg((x,y))\leq 2$.
Let $(x,y)\in L_{(m_b,b)}$. 
Then $y=x m_b+b$, and so
\[y=-x\frac{b}{1-b}+b,\]
\[(1-b)(y-b)=-x b,\text{ and }\]
\[b^2+(x-y-1)b+y=0.\]
The final equation is quadratic in $b$ and thus has at most two solutions in $\zF_q$, and so $\deg((x,y))\leq2$. Therefore $\cF$ is a triangular family of size $q+1$, showing the lower bound.

For the upper bound, consider a triangular subfamily $\cF\subseteq\zP(q)$ with $|\cF|>q+1$.
By Fact \ref{f:MaxSetSizeBound}, $|\cF|=|L|+1=q+2$.
By symmetry, we may assume that $\cF$ contains the lines $L_{(\w,\w)},L_{(\w,0)},L_{(0,0)}$, so we define $\cF'=\cF-\{L_{(\w,\w)},L_{(\w,0)},L_{(0,0)}\}$.

Because $\cF$ is 1-intersecting and triangular, and $|\cF'|=q-1$, we know that for every $b\in\zF_q^*=\zF_q-\{0\}$ there exists a unique $m_b\in\zF_q^*$ such that $L_{(m_b,b)}\in\cF'$.
Moreover, there exists a unique $a_b\in\zF_q^*$ such that $(a_b,0)\in L_{(m_b,b)}$.
Hence $m_b a_b+b=0$; i.e.,  $b=-m_b a_b$.

Let $\gamma$ be a generator for $\zF_q^*\cong \zZ_{q-1}$, so that every $z\in\zF_q^*$ satisfies $z=\gamma^e$ for some $e\in\zZ_{q-1}=\{0,\ldots,q-2\}$.
Write $\zF_q^*=\{b_1,\ldots,b_{q-1}\}$, and relabel $m_{b_i}$ and $a_{b_i}$ as $m_i$ and $a_i$, respectively.
For each $1\le i\le q-1$ define $d_i$, $e_i$, and $f_i$ uniquely by $b_i=\gamma^{d_i}$, $m_i=\gamma^{e_i}$, and $a_i=\gamma^{f_i}$, so that $d_i=e_i+f_i\mod{(q-1)}$.
Hence $\{b_i\}=\{m_i\}=\{a_i\}=\zF_q^*$, so that $\{d_i\}=\{e_i\}=\{f_i\}=\zZ_{q-1}$.
That is, there exist permutations $(d_1\cdots d_{q-1})$, $(e_1\cdots e_{q-1})$, and $(f_1\cdots f_{q-1})$ of $\zZ_{q-1}$ such that $d_i=e_i+f_i\mod q-1$ for all $i$.
But, since $q$ is odd, this yields the contradiction that
$$\frac{q-1}{2}=\sum_{i=1}^{q-1} d_i=\sum_{i=1}^{q-1} e_i+\sum_{j=1}^{q-1} f_j=\frac{q-1}{2}+\frac{q-1}{2}=0\mod q-1$$
Hence $|\cF'|< q-1$, and so $|\cF|\leq q+1$.
\end{proof}

\begin{theorem}
\label{t:Power2ProjPlane}
Let $q=2^t$ for $t\in\mathbb{N}$.
If $\cF$ is a triangular subfamily of $\zpq$ of maximum size then $|\cF|=q+2$.
\end{theorem}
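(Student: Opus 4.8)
The plan is as follows. The upper bound is immediate: every line of $\zpq$ has $q+1$ points, so Fact~\ref{f:MaxSetSizeBound} bounds the size of any triangular intersecting subfamily of $\zpq$ by $1+(q+1)=q+2$. Since every pair of lines of $\zpq$ meets, every subfamily is automatically intersecting, so by Fact~\ref{f:TriangularDelta} all that remains is to exhibit $q+2$ lines with $\D(\cF)=2$; that is, $q+2$ lines no three of which pass through a common point.

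I would take
\[
\cF = \{L_{(m,m^2)}\mid m\in\zF_q\}\cup\{L_{(\w,\w)},L_{(\w,0)}\},
\]
a coordinatized form of the dual of the ``conic plus nucleus'' hyperoval $\{(x,x^2):x\in\zF_q\}\cup\{(\w,0),(\w,\w)\}$. First I would note $|\cF|=q+2$: the $q$ lines $L_{(m,m^2)}$ have pairwise distinct slopes, hence are distinct, and $L_{(\w,\w)},L_{(\w,0)}$ differ from these and from each other.

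Then I would check $\D(\cF)=2$ by running through the point classes of $V$. An affine point $(x,y)$ with $x\ne0$ lies on neither $L_{(\w,\w)}$ nor $L_{(\w,0)}$, and lies on $L_{(m,m^2)}$ exactly when $y=mx+m^2$, a quadratic in $m$ with at most two roots. An affine point $(0,y)$ lies on $L_{(\w,0)}$ but not $L_{(\w,\w)}$, and lies on $L_{(m,m^2)}$ only for the unique $m$ with $m^2=y$, using that $x\mapsto x^2$ is a bijection of $\zF_q$ because $q$ is a power of $2$. A point $(\w,m)$ with $m\in\zF_q$ lies on $L_{(\w,\w)}$ and on $L_{(m,m^2)}$ and on no other member of $\cF$. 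Finally $(\w,\w)$ lies on $L_{(\w,\w)}$ and $L_{(\w,0)}$ only. In each case the degree is at most $2$, and it equals $2$ at $(\w,\w)$, so $\D(\cF)=2$; hence $\cF$ is triangular by Fact~\ref{f:TriangularDelta}. Together with the upper bound, this proves that the maximum size of a triangular subfamily of $\zpq$ is $q+2$.

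The main obstacle is finding the construction: the whole statement reduces to producing a hyperoval of $\zpq$, and ``conic plus nucleus'' works precisely because $q$ is even. Concretely, $2=0$ in $\zF_q$ forces every tangent of $y=x^2$ to be a horizontal line $y=c$, and the Frobenius bijection $x\mapsto x^2$ makes the $q$ such tangents, together with the line at infinity, exactly the $q+1$ lines through the single point $(\w,0)$, which is what collapses the potential triple incidences. Once $\cF$ is written down, the verifications above are routine root counts for quadratics, in the same spirit as the proof of Theorem~\ref{t:ProjPlane}.
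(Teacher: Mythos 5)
Your proof is correct, and it follows the same two-step strategy as the paper (upper bound from Fact~\ref{f:MaxSetSizeBound}, lower bound from an explicit family of $q+2$ lines with $\D=2$), but the construction you exhibit is genuinely different. The paper recycles the odd-$q$ family from Theorem~\ref{t:ProjPlane} --- the lines $L_{(m_b,b)}$ with $m_b=\frac{-b}{1-b}$ for $b\notin\{0,1\}$ together with $L_{(\w,\w)},L_{(\w,0)},L_{(0,0)}$ --- and appends $L_{(1,1)}$, reducing the check to the equation $b^2+x+1=0$ having a unique root in characteristic $2$; this is terse and quietly leaves to the reader the points of $L_{(1,1)}$ where the root $b=\sqrt{x+1}$ lands in the excluded set $\{0,1\}$, as well as its point at infinity. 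Your family $\{L_{(m,m^2)}\mid m\in\zF_q\}\cup\{L_{(\w,\w)},L_{(\w,0)}\}$ is self-contained, does not depend on the odd-$q$ construction, and your case analysis over the point classes of $V$ is complete: the quadratic $m^2+xm+y=0$ for affine points with $x\ne0$, the Frobenius bijection for $(0,y)$, and the two explicit incidences at each $(\w,m)$ and at $(\w,\w)$ together give $\D(\cF)=2$ exactly. Both proofs amount to exhibiting a dual hyperoval; yours buys a cleaner, uniform verification at the cost of not reusing Theorem~\ref{t:ProjPlane}, while the paper's emphasizes that the odd-$q$ extremal family extends by exactly one line when $q$ is even. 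One small remark: your closing heuristic about the tangents of $y=x^2$ being horizontal and concurrent at the nucleus is motivation only and plays no role in the verification, which stands on the root counts alone.
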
 

\begin{proof}
Given that all lines in $\zpq$ contain $q+1$ points, from Fact \ref{f:MaxSetSizeBound} we get $|\cF|\leq q+2$.
Now we need only show the lower bound by displaying a triangular family of size $q+2$.

Using the lower bound $\cF'$ constructed in Theorem \ref{t:ProjPlane}, let $\cF=\cF'\cup\{L_{(1,1)}\}$.
Here we must show for any $(x,y)\in L_{(1,1)}$ there is exactly one $L_{(m_b,b)}\in\cF$ containing $(x,y)$.
That is, over $\zF_2$ we have
\begin{align*}
-x+1&=-x\frac{b}{1-b}+b,\\
0&=b^2+(2x-2)b-x+1
=b^2+x+1, \text{ and }\\
b&=\sqrt{x+1},
\end{align*}
which is unique in characteristic 2.
Thus there is exactly one line $L_{(m_b,b)}\in\cF$ containing $(x,y)$.
Therefore $\cF$ is a triangular family of size $q+2$, showing the lower bound, and thus $|\cF|=q+2$.
\end{proof}


\section{More Questions}
\label{s:Quests}

Define the $d^{\it th}$ {\it power} of a graph $G$, written $G^{(d)}$, to have $V(G^{(d)})=V(G)$ and an edge $xy$ for every pair of vertices with $\dist_G(x,y)\le d$.

\begin{prob}
\label{p:OtherG}
Study the $\cP^r$-\ekr\ properties of other graphs such as $C_n^{(d)}$ or subdivisions of complete graphs or complete bipartite graphs.
(Note that $C_n$ is a subdivision of $K_{2,2}$ when $n\ge 4$ and the theta graph $\Theta(a_1,\ldots,a_k)$ is a subdivision of $K_{2,k}$ when each $a_i\ge 2$.)
Also consider graphs $C_n^f$ for $f:V(C_n)\rightarrow\mathbb{N}$, or the more general class of trees plus an edge; i.e., {\it unicyclic} graphs.
\end{prob}

We note that Frankl, et al. \cite{FHIKLMP} addresses the \ekr\ problem for spanning trees of $K_n$.
Along these lines, it would be equally interesting to study the case of spanning paths of $K_n$ (i.e., $r=n$ in Problem \ref{p:OtherG}).

A family $\cF$ is called {\it Sperner} if there are no two sets $A,B\in\cF$ with $A\subseteq B$. Bollobas' inequality \cite{Bineq} gives a very strong condition on the sizes of Sperner families. There is a somewhat complicated history here; Bollob\'as' inequality is a general version of a sharper-in-some-cases result of Lubell, which had been earlier discovered post-Bollob\'as but pre-Lubell by Meshalkin, and which can in hindsight be read out of an earlier result of Yamamoto; in order to better reflect all four author's various contributions in chronological order, this is perhaps best called the YBLM inequality. Proving YBLM-type theorem in this $\cH$-EKR context would be a significant step forward; we provide a starting question here.

\begin{qst}
Assume $\cF\subseteq\cP(C_n)$ is a maximum size family that is both intersecting and Sperner. Is it true that $\cF$ must be uniform?
\end{qst}

It may be both interesting and necessary to consider cross-intersecting families of paths in pursuit of answering the questions above; bounding the sum or product of the sizes of such cross-intersecting families is also useful and relevant in its own right.

Theorem \ref{t:Sun-S-IntersectingEKRunif} states that, among the largest intersecting families of $\cP^r(C_n)$ having transversal number at least 2 is one having transversal number exactly 2.

\begin{qst}
Given a graph $G$, is it true that among the largest intersecting families in $\cP^r(G)$ (or $\cP(G)$) with transversal number at least $k$ is one with transversal number exactly $k$?
\end{qst}

Finally, we ask the question left open by Theorem \ref{t:OddFam}. There, we show that for odd sized intersecting families of $2k-1$ sets, that the families achieving $\tau(\cF)=k$ are the triangular families. We also showed that this is not necessarily the case for all even $n$, giving examples when $|\cF|\in\{4,6\}$.

\begin{qst}
\label{q:EvenFam}
For which $k$ does there exist an intersecting family $\cF$ of size $2k$ with $\tau(\cF)=k$ and $\Delta(\cF)>2$?  How can we characterize such families?
\end{qst}

\end{document}